 \newtheorem{res}{Result}[section]
 \newtheorem{thm}[res]{Theorem}
 \newtheorem{example}[res]{Example}
 \newtheorem{remark}[res]{Remark}
 \newtheorem{prop}[res]{Proposition}
 \newtheorem{lem}[res]{Lemma}
\newtheorem{defn}[res]{Definition}
\numberwithin{equation}{section}
\def\tT{{\mathbf T}}
\def\d{d}
\def\defto{\buildrel def\over =}
\def\ft{f^{(2)}}
\def\mp{\mu^1_+}
\def\mpj{\mu^j_+}
\def\mmk{\mu^k_-}
\def\mm{\mu^1_-}
\def\T{{\mathcal T}}
\def\vp{{\varsigma_+}}
\def\P{{\mathcal P}}
\def\X{{\boldsymbol{X}}}
\def\tC{{\boldsymbol{C}}}
\def\C{\tC}
\def\bN{{\boldsymbol{N}}}
\def\tS{{\boldsymbol{S}}}
\def\Lto{\buildrel L^2\over \rightarrow}
\def\weak{\buildrel \hbox{ weak}^*\over \rightarrow}
\begin{document}
\bibliographystyle{plain}

\title[An RDE on the unit interval]{The noisy veto-voter model:
a Recursive Distributional Equation on [0,1]}

\date{}
\author{SAUL JACKA}
\address{Dept. of Statistics, University of Warwick, Coventry CV4 7AL, UK}
\email{s.d.jacka@warwick.ac.uk}
\author{MARCUS SHEEHAN}

\begin{abstract}
We study a particular example of a recursive
distributional equation (RDE) on the unit interval. We identify
all invariant distributions, the corresponding ``basins of
attraction" and address the issue of endogeny for the associated
tree-indexed problem, making use of an extension of a recent
result of Warren.
\end{abstract}
\thanks{{\bf Key words:} endogeny; veto-voter model; distributed error-reporting; basin of
attraction; Galton-Watson branching process.}
\thanks{{\bf AMS 2000 subject classifications:} Primary 60E05; secondary 60E99, 60J80.}

\thanks{The authors are grateful for many fruitful discussions with Jon
Warren on the topics of this paper.}

\maketitle
\centerline{{\today}}

\section{Introduction}
Let $M$ be a random variable, taking values in
$\mathbb{\overline{N}} = \{1, ... ;
\infty\}$, and let $\xi$ be an independent Bernoulli($p$) random variable.

We consider the following simple Recursive Distributional Equation (henceforth abbreviated as RDE):
\begin{equation}
\label{DE1} Y = \xi\prod_{i=1}^M Y_i+ (1-\xi)(1-\prod_{i=1}^M Y_i).
\end{equation}
Viewing (\ref{DE1}) as an RDE, we seek a stationary
distribution, $\nu$, such that if $Y_i$ are iid with distribution $\nu$
and are independent of $(M, \xi)$, then $Y$ also has distribution $\nu$.

We term (\ref{DE1}) the noisy veto-voter model since, if each $Y_i$ takes values
in $\{0,1\}$ with value $0$ being regarded as a 
veto, then the outcome is vetoed unless either (a) each voter i  \lq assents' 
($Y_i=1$ for each $1\leq i\leq M$) and
there's no noise ($\xi=1$) or (b) someone vetos, but is reversed by the
noise ($\xi=0$). 

The system was originally envisaged as modelling a
representative voting system applied to a veto issue. Thus each
representative votes according to their constituency if $\xi=1$ or
reverses the decision if $\xi=0$. An alternative interpretation is as a
model for a noisy distributed error-reporting system. Here a $0$
represents an error report from a sub-system. Thus there is an error in
the system if there is an error in any sub-sytem (hence the veto
structure). Noise can reverse the binary (on-off) report from any
sub-system.

In this paper, we look for solutions to the RDE (\ref{DE1}) taking values
in $[0,1]$.

As observed in Aldous and Bandhapadhyay \cite{Aldous}, and
as we shall explain in a little more detail in section 2, we may think of
(families of) solution to the RDE as being located at the nodes of a
(family) tree (for a Galton-Watson branching process). Actually,
for some purposes we shall find it more convenient to embed this family
tree into $\tT$, the deterministic tree with infinite branching factor of size
$\aleph_0$.

The generic setup in such circumstances is to find distributional fixed
points of the recursion:
\begin{equation}
\label{Gen}
X_u = f(\xi_u;X_{ui}, i\geq 1),
\end{equation}
where $X_u$ and $\xi_u$ are respectively, the value and the noise associated with node 
$u$ and $ui$ is the address of the $i$th daughter of node $u$.

With this model, it is of some interest not only to find solutions to
the RDE (\ref{Gen}) but also to  answer the question of endogeny:
$$
\hbox{\lq is $(X_u; u\in
\tT)$ measurable with respect to $(\xi_u;u\in \tT)$?'}
$$
If this measurability condition holds, then $X_{\cdot}$
is said to be endogenous.

In the context of the error-reporting model, endogeny represents the worst possible
situation---the top-level error report is based entirely on the noise
and is uninfluenced by the error state of low-level sub-systems.
Similarly, in the veto-voter paradigm, endogeny represents the situatrion where the voice of
the \lq little man' is completely swamped by reversals by officials.

In this paper we will first show how to transform (\ref{DE1}) into the new
RDE:
\begin{equation}
\label{DE2} X = 1-\prod_{i=1}^N X_i,
\end{equation}
for a suitable, random variable $N$, independent of the $X_i$,
Then we'll not only find all the solutions to this RDE on $[0,1]$, their
basins of attractions and the limit cycles of the corresponding map on
the space of distributions on $[0,1]$, but also
give necessary and sufficient
conditions for the corresponding solutions on $\tT$ to be endogenous.

The fundamental technique we use, which we believe is entirely novel, is
to consider the distribution of a solution conditional  upon the noise and to identify
endogeny by showing that this conditional distribution is concentrated
on $\{0,1\}$.

\section{Notation and a transformation of the RDE}\label{tnote}
\subsection{Tree-indexed solutions}
We seek distributions $\nu$ on $[0,1]$ such that if ($Y_i; 1\leq i)$ are
independent with distribution $\nu$, then the random variable $Y$
satisfying (\ref{DE1}) also has distribution $\nu$. More precisely,
writing $\P$ for the set of probability measures on
$[0,1]$, suppose that $M$ has distribution $d$ on
$\mathbb{\overline{Z}}_+$ and define the map
$$\T \equiv \T_d : \P \rightarrow
\P$$
Then we set $\T(\nu)$ to be the law of the
random variable $Y$ given by (\ref{DE1}), when the $Y_i$ are
independent and identically distributed with distribution $\nu$ and are
independent of $N$,
and seek fixed points of the map $\T$. The existence and
uniqueness of fixed points of this type of map, together with
properties of the solutions, are addressed by Aldous and
Bandhapadhyay in \cite{Aldous} (the reader is also referred to 
\cite{Band} and \cite{Rusc} and the references therein).
The linear and min cases are particularly
well-surveyed, though we are dealing with a non-linear case to
which the main results do not apply.

A convenient
generalisation of the problem is the so-called {\em tree-indexed} problem, in which
we think of the $Y_i$ as being marks associated with the daughter nodes of the root of $T$, a family tree of a
Galton-Watson branching process. We start at some level $m$ of the
random tree. Each vertex $v$ in
level $m-1$ of the tree has $M_v$ daughter vertices, where the $M_v$
are i.i.d. with common distribution $d$ and 
has associated with it noise $\xi_v$, where the $(\xi_u;u\in T)$
are iid and are independent of the $(M_u;u\in T)$.

By associating with daughter
vertices independent random variables $Y_{vi}$ having
distribution $\nu$, we see that $Y_v$ and $Y_{vi};1\leq i\leq M_v$ satisfy
equation (\ref{DE1}).

In this setting the notion of endogeny was introduced in
\cite{Aldous}. Loosely speaking, a solution to the tree-indexed
problem (which we will define precisely in the next section) is
said to be endogenous if it is a function of the initial data or noise
alone so that no additional randomness is present.

It is convenient to work on a tree with infinite
branching factor and then think of the random tree of the previous
paragraph as being embedded within it. An initial ancestor (in
level zero), which we denote $\emptyset$, gives rise to a
countably infinite number of daughter vertices (which form the
members of the first generation), each of which gives rise to an
infinite number of daughters (which form the members of the second
generation), and so on. We assign each vertex an address according
to its position in the tree: the members of the first generation
are denoted $1, 2, ...$, the second $11,12,\ldots 21, 22, \ldots ,
31, 32, \ldots$ etc, so that vertices in level $n$ of the tree
correspond to sequences of positive integers of length $n$.
We also write $uj, j = 1, 2, ...$ for the daughters of a
vertex $u$. We write $\tT$ for the collection of all
vertices or nodes (i.e. $\tT = \bigcup_{n=0}^\infty
\mathbb{N}^n$) and think of it as being partitioned by depth,
that is, as being composed of levels or generations, in the way
described and define the depth function $|\cdot|$ by $|u| = n$ if vertex $u$ is in
level $n$ of the tree. Associated to each of the vertices $u \in
\tT$ are iid random variables $M_u$ with distribution $\d$, telling us the
(random) number of offspring produced by $u$. The vertices $u1,
u2, ..., uM_u$ are thought of as being alive (relative to
$\emptyset$) and the $\{uj: j> M_u\}$  as dead. We can now write our original equation as a
recursion on the vertices of $\tT$:
\begin{equation}
\label{RDE1} Y_u = \xi_u\prod_{i=1}^{M_u} Y_{ui}+(1-\xi_u)(1-\prod_{i=1}^{M_u} Y_{ui}), \; u
\in \tT.
\end{equation}
The advantage of the embedding now becomes clear: we can talk
about the RDE at any vertex in the infinite tree and yet, because the product only runs
over the live daughters relative to $u$, the random Galton-Watson family
tree is encoded into the RDE as noise.

\subsection{The transformed problem}
It is a relatively simple maer to transform the RDE (\ref{RDE1}) into
the following, simpler, RDE:
\begin{equation}\label{RDE}
X_u = 1-\prod_{i=1}^{N_u} X_{ui}, \; u
\in \tT.
\end{equation}
To do so, first note that if we colour red all the nodes, $v$, in the tree
$\tT$ for which $\xi_v=0$ then it is clear that we may proceed down each
line of descent from a node $u$ until we hit a red node. In this way, we
either "cut" the tree at a collection of nodes which we shall view as
the revised family of $u$, or not, in which case $u$ has an infinite
family. Denote this new random family size by $N_u$ then
$$Y_u= 1-\prod_{i=1}^{N_u} Y_{\hat{ui}},$$
if $u$ is red, where $\hat{ui}$ denotes the $i$th red node in the
revised family of $u$.
Now condition on node $u$ being red, then with this revised tree we
obtain the RDE (\ref{RDE}).
It is easy to see that if the original tree has family size PGF $G$,
then the family size in the new tree corresponds to the total number of
deaths in the original tree when it is independently thinned, with the descendants of each
node being pruned with probability $q$. It is easy to obtain the
equation for the PGF, $H$, of the family size $N_u$ on the
new tree:
\begin{equation}\label{trans}
H(z)=G(pH(z)+qz).
\end{equation}

\section{The discrete and conditional probability solutions}
We begin with some notation and terminology. We say that the
random variables in (\ref{RDE1}) are weakly stationary if $X_u$ has
the same distribution for every $u \in \tT$. The
stationarity of the $X_u$ corresponds to $X_u$ having as
distribution an invariant measure for the distributional equation
(\ref{RDE}). 
\begin{defn}
We say that the process (or collection of random
variables) $\X = (X_u; u \in \tT)$ is a {\em tree-indexed}
solution to the RDE (\ref{RDE}) if
\begin{enumerate}
\item for every $n$, the random variables $(X_u; |u| = n)$ are
mutually independent and independent of $(N_v; |v| \leq n-1)$;
\item for every $u \in \tT$, $X_u$ satisfies
\[X_u = 1 - \prod_{i=1}^{N_u} X_{ui},\] and the $(X_u; u \in
\tT)$ are weakly stationary. \end{enumerate}
\end{defn}

Notice that these conditions determine the law of $\X$. This
means that a tree-indexed solution is also stationary in the
strong sense, that is, a tree-indexed solution is ``translation
invariant" with respect to the root (if we consider the collection
$\X^v = (X_u; u \in \tT_v)$, where $\tT_v$ is the
sub-tree rooted at $v$, then $\X^v$ has the same distribution as
$\X$ for any $v \in \tT$). Furthermore, we say that such a
solution is {\em endogenous} if it is measurable with respect to the
random tree (i.e. the collection of family sizes) $(N_u; u \in
\tT)$. As we remarked in the introduction, in informal
terms this means that the solution depends only on the noise with no additional randomness coming from the boundary of
the tree. See \cite{Aldous} for a thorough discussion of endogeny
together with examples.
\newline
\newline
The following is easy to prove.
\begin{lem}
Let $(X_u; u \in \tT)$ be a tree-indexed solution to the
RDE (\ref{RDE}). Then the following are equivalent:
\begin{enumerate} 
\item $X$ is endogenous; 
\item $X_\emptyset$ is
measurable with respect to $\sigma(N_u; u \in \tT)$; 
\item $X_u$ is measurable with respect to $\sigma(N_v; v \in
\tT)$ for each $u \in \tT$; 
\item $X_u$ is measurable
with respect to $\sigma(N_v; v \in \tT_u)$ for each $u \in
\tT$.
\end{enumerate}
\end{lem}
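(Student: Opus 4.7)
The plan is to establish the implications (1)$\Rightarrow$(3)$\Rightarrow$(2)$\Rightarrow$(4)$\Rightarrow$(1). Two of these are essentially trivial, one is a simple inclusion of $\sigma$-algebras, and the substantive content is concentrated in the step (2)$\Rightarrow$(4), which is where the strong stationarity of $\X$ (the translation invariance noted just before the lemma) does the work.

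First I would dispatch the easy directions. The implication (1)$\Rightarrow$(3) is immediate from the definition of endogeny, and (3)$\Rightarrow$(2) is obtained by setting $u=\emptyset$. For (4)$\Rightarrow$(1): since $\tT_u\subseteq\tT$ we have $\sigma(N_v;v\in\tT_u)\subseteq\sigma(N_v;v\in\tT)$, so (4) upgrades to (3), and since $\tT$ is countable the measurability of each coordinate $X_u$ with respect to $\sigma(N_v;v\in\tT)$ is equivalent to the joint measurability of the whole family $(X_u;u\in\tT)$, which is the definition of endogeny.

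The main step is (2)$\Rightarrow$(4). Assume $X_\emptyset$ is $\sigma(N_v;v\in\tT)$-measurable, so that there exists a measurable function $f$ with $X_\emptyset=f\bigl((N_v;v\in\tT)\bigr)$ almost surely. Fix $u\in\tT$. By the strong stationarity of the tree-indexed solution (the translation invariance of $\X$ with respect to the root already observed in the paper), the joint distribution of
\[\bigl(X_u,\;(N_{uv};v\in\tT)\bigr)\]
equals the joint distribution of $\bigl(X_\emptyset,(N_v;v\in\tT)\bigr)$. Applying the identity $X_\emptyset=f\bigl((N_v;v\in\tT)\bigr)$ and transporting it through this equality in law gives $X_u=f\bigl((N_{uv};v\in\tT)\bigr)$ almost surely, so $X_u$ is measurable with respect to $\sigma(N_w;w\in\tT_u)$. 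This yields (4).

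The only subtle point is the bookkeeping in (2)$\Rightarrow$(4): one must be careful that an a.s.\ identity between two random variables transports correctly under equality in joint law to an a.s.\ identity on the translated coordinates. This is a standard application of the fact that equality in distribution of pairs $(Z,W)\stackrel{d}{=}(Z',W')$ together with $Z=f(W)$ a.s.\ implies $Z'=f(W')$ a.s., and is the only place where anything beyond formal inclusion is used.
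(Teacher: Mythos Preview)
Your proof is correct and follows essentially the same approach as the paper's (omitted) argument: the trivial implications are dispatched immediately, and the only substantive step, (2)$\Rightarrow$(4), is handled via the strong stationarity (translation invariance) of the tree-indexed solution, exactly as the paper intends. Your cycle (1)$\Rightarrow$(3)$\Rightarrow$(2)$\Rightarrow$(4)$\Rightarrow$(1) is in fact slightly cleaner than the paper's scattered list of implications, and your explicit remark about transporting an a.s.\ identity through equality of joint laws makes the stationarity step more rigorous than the paper's terse ``think of $u$ as being the root''.
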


\begin{remark} Notice that if a tree-indexed solution to (\ref{RDE}) is endogenous
then property (1) of a tree-indexed solution is
automatic: for every $u \in \tT$, $X_u$ is measurable with
respect to $\sigma(N_v; v \in \tT_u)$ and hence is
independent of $(N_v; |v|\leq n-1)$.
\end{remark}
\begin{lem} \label{invmsr}
There exists a unique probability measure on $\{0,1\}$ which is
invariant under (\ref{DE2}).
\end{lem}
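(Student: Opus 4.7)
The plan is to reduce the invariance condition on measures supported in $\{0,1\}$ to a single scalar fixed-point equation. Any probability measure on $\{0,1\}$ is determined by $p := P(X=1) \in [0,1]$; call it $\mu_p$. I would therefore look for all $p \in [0,1]$ such that, when $(X_i)$ are i.i.d.\ with law $\mu_p$ and are independent of $N$, the random variable $X = 1 - \prod_{i=1}^N X_i$ also has law $\mu_p$.

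The next step is to compute $P(X=1) = P(\prod_{i=1}^N X_i = 0)$. Since each $X_i$ lies in $\{0,1\}$, the product is $1$ iff $X_1 = \dots = X_N = 1$, where for $N = \infty$ I interpret this as the (a.s.) limit of the finite products. Conditioning on $N$, and using the conventions $p^\infty = 0$ for $p<1$ and $1^\infty = 1$, I get $P(\prod X_i = 1) = E[p^N] =: G_N(p)$, the extended generating function of $N$. By monotone convergence $G_N$ is continuous and non-decreasing on $[0,1]$. Invariance of $\mu_p$ therefore becomes the scalar equation
\[
\phi(p) \;:=\; p + G_N(p) - 1 \;=\; 0.
\]

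To finish, I would show $\phi$ has exactly one zero in $[0,1]$. For strict monotonicity, for $0 \le p < q \le 1$ I have $\phi(q) - \phi(p) = (q-p) + (G_N(q) - G_N(p)) \ge q - p > 0$, using only that $G_N$ is non-decreasing; this already gives uniqueness. For existence, $\phi(0) = P(N=0) - 1 \le 0$ and $\phi(1) = P(N<\infty) \ge 0$, so continuity and the intermediate value theorem yield a root. The only genuine subtlety — and where I would take most care — is the treatment of the event $\{N = \infty\}$: one must make sure that the infinite product $\prod X_i$ is well defined a.s., that the correct convention for $p^\infty$ is used, and that $G_N$ is continuous at $1$. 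Once these are in hand the argument collapses to the elementary monotonicity/IVT step above.
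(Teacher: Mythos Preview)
Your proposal is correct and follows essentially the same route as the paper: both reduce invariance to the scalar equation $p + H(p) = 1$ (the paper writes $K(x)=H(x)+x-1$), obtain existence from the sign change $K(0)<0$, $K(1)>0$ via the intermediate value theorem, and uniqueness from strict monotonicity of $x\mapsto H(x)+x$. One small slip: with your stated convention $1^\infty=1$ you get $G_N(1)=1$ and hence $\phi(1)=1$, not $P(N<\infty)$, and $G_N$ is then discontinuous at $1$ when $P(N=\infty)>0$; this is exactly the subtlety you flag, and it is harmless here since the root lies in $[0,1)$ and $\phi$ is continuous there (the paper sidesteps the issue by assuming $H(0)=0$ and working with $K(1)>0$).
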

\begin{proof} Let $X$ be a random variable whose distribution is
concentrated on $\{0,1\}$ and which is
invariant under (\ref{DE2}). Let ${\mu^1}= \mathbb{P}(X = 1)$. We
have then $\mathbb{P}(X = 0) = 1- {\mu^1}$ and 
\[
\mathbb{P}(X_i =
1;\hbox{ for } i = 1, ..., N) = \sum_n \mathbb{P}(X_i = 1;\hbox{ for }  i = 1, ...,
n|N=n)\mathbb{P}(N=n)
=H(\mu^1).
\] 
Now,
$X = 0$ if and only if $X_i = 1$ for $i = 1, ..., N$. Hence a
necessary and sufficient condition for invariance is
\begin{equation}\label{mu}
1 - {\mu^1}=H({\mu^1}).
\end{equation}
Now let
\[
K(x) \defto H(x) + x - 1.
\] 
Since $H$ is a generating function and
$H(0) = 0$, we have $K(0) = -1 < 0$ and $K(1)  > 0$ so that $K$
is guaranteed to have a zero in $(0,1)$, and it is
unique since the mapping $x \mapsto H(x) + x$ is strictly
increasing.
\end{proof}
We can now deduce that there exists a tree-indexed solution on
$\{0,1\}^{\tT}$ to the RDE (\ref{RDE}) by virtue of Lemma 6
of \cite{Aldous}.

\begin{thm}
Let $\tS = (S_u; u \in \tT)$ be a tree-indexed solution on
$\{0,1\}^{\tT}$ to the RDE (\ref{RDE}) (i.e. the $S_u$ have the invariant
distribution on the two point set $\{0,1\}$), which we will henceforth refer
to as the {\em discrete solution}. Let $C_u =
\mathbb{P}(S_u = 1|N_v; v \in \tT)$. Then $\C = (C_u; u \in
\tT)$ is the unique {\em endogenous} tree-indexed solution to the RDE.
\end{thm}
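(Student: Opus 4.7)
The plan is to prove in sequence that $\C$ is a tree-indexed solution, that it is manifestly endogenous, and that no other endogenous tree-indexed solution exists. The first two points are short. The key ingredient is that in the discrete solution $\tS$ the restrictions to the disjoint subtrees rooted at $u1, u2, \ldots$ are conditionally independent given the remainder of the noise, a structural property inherited from the recursive-tree-process construction (Lemma 6 of \cite{Aldous}). Consequently, conditionally on $\mathcal{F} := \sigma(N_v; v \in \tT)$, the children $(S_{ui})_{i \geq 1}$ are independent, and taking $\mathbb{E}[\cdot\mid\mathcal{F}]$ in $S_u = 1 - \prod_{i=1}^{N_u} S_{ui}$ immediately produces the RDE for $\C$. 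Since $C_u = \mathbb{P}(S_u = 1\mid (N_v; v \in \tT_u))$ depends only on the subtree $\tT_u$, weak stationarity, mutual independence of $(C_u; |u|=n)$, and independence from $(N_v; |v|\le n-1)$ all follow from the disjointness of subtrees and the corresponding properties of $\tS$; endogeny of $\C$ is by construction.

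The substantive step is uniqueness, for which I would use a coupling argument. Given an arbitrary endogenous tree-indexed solution $\X$, introduce iid Uniform$(0,1)$ variables $(V_u; u \in \tT)$ independent of $\X$ and, for each $n$, define a $\{0,1\}$-valued process $Z^{(n)}$ on $\{u:|u|\le n\}$ by setting $Z^{(n)}_u := \mathbf{1}(V_u \le X_u)$ for $|u|=n$ and propagating upward via the RDE. A downward induction on the level, exploiting the identity $\prod_i X_{ui} = 1 - X_u$ (which holds because $\X$ itself satisfies the RDE), shows that conditionally on $\X$ the variables $(Z^{(n)}_u; |u|=k)$ are independent Bernoulli$(X_u)$'s for every $k \le n$. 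Crucially this conditional law does not depend on $n$, so Kolmogorov extension yields a limit process $Z = (Z_u; u \in \tT)$ which inherits the RDE at every node. Together with the level-wise mutual independence and weak stationarity that descend (unconditionally) from those of $\X$, this makes $Z$ a tree-indexed $\{0,1\}$-valued solution to (\ref{RDE}).

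The finish invokes the remark, recorded in the paper just after the definition of a tree-indexed solution, that its defining conditions determine the law of the process, together with uniqueness of the Bernoulli marginal (Lemma \ref{invmsr}). These force $(Z, N) \stackrel{d}{=} (\tS, N)$, so $\mathbb{E}[Z_u\mid N]$ and $\mathbb{E}[S_u\mid N] = C_u$ coincide a.s.\ as $\sigma(N)$-measurable functions. On the other hand, conditionally on $\X$ the variable $Z_u$ depends only on $\X$ and the independent uniforms $V$, so $\mathbb{E}[Z_u\mid N] = \mathbb{E}[Z_u\mid \X] = X_u$, using endogeny to make $\X$ a function of $N$. Thus $X_u = C_u$ almost surely.

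I expect the main obstacle to be the Kolmogorov-extension step and the associated identification of the joint law of $(Z, N)$ with $(\tS, N)$: one has to verify carefully that the conditional distributions given $\X$ are consistent as $n$ grows, that the RDE and all the tree-indexed-solution properties genuinely pass to the Kolmogorov limit, and that uniqueness of the Bernoulli tree-indexed solution really pins down the joint law with the $N$'s (and not merely the marginal law of $Z$). Everything else reduces to routine manipulations with conditional expectations using the subtree-independence of $\tS$.
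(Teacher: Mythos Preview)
Your verification that $\C$ is a tree-indexed solution, and that it is endogenous, runs along the same lines as the paper's. The genuine difference is in the uniqueness step.

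The paper's argument is a short martingale computation. For any tree-indexed solution $L$, the mean $\mathbb{E}L_u$ must satisfy $1-m = H(m)$ and hence equal $\mu^1$. One then observes that $L_u^n := \mathbb{E}[L_u \mid N_v;\,|v|\le n]$ satisfies exactly the finite recursion (\ref{***}) with the same boundary value $\mu^1$ at the deepest level, so $L_u^n = C_u^n$ for every $n$. If $L$ is endogenous, the bounded martingale $L_u^n$ converges a.s.\ to $L_u$, giving $L_u = C_u$. No extension theorem, no coupling: just the remark that the finite-depth conditional expectations are pinned down by the common mean.

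Your route---randomising an endogenous solution $\X$ into a $\{0,1\}$-valued process $Z$, identifying $(Z,N)\stackrel{d}{=}(\tS,N)$ via the law-uniqueness remark, and then reading off $X_u=\mathbb{E}[Z_u\mid \X]=\mathbb{E}[Z_u\mid N]=C_u$---is correct and conceptually appealing: it exhibits the discrete solution as a randomisation of \emph{any} endogenous solution. The price is exactly the technical burden you flag. You must carry the joint law with $N$ through the Kolmogorov step (so that $\mathbb{E}[Z_u\mid N]$ is well defined and matches $C_u$), check that the almost-sure recursive relation survives extension (a countable-coordinate event when $N_u$ is unbounded), and justify $\mathbb{E}[Z_u\mid N]=\mathbb{E}[Z_u\mid \X]$ by noting that $Z_u$ is (in law) a function of $\X$ and independent auxiliary randomness, with $\X$ itself $\sigma(N)$-measurable. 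None of this is hard, but the paper's martingale argument sidesteps all of it in two lines.
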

\begin{proof}
To verify the relationship between the random variables, we have,
writing $\bN= (N_u; u \in \tT)$
and 
$\bN_u=(N_v; v\in\tT_u)$,

\[
C_u = {\mathbb P}(S_u=1|\bN )=\mathbb{E}[1_{(S_u = 1)}|\bN ] =
\mathbb{E}[S_u|\bN ]
\]
\[=\mathbb{E}[1 - \prod_{i=1}^{N_u} S_{ui}|\bN]\]
\[=1 - \mathbb{E}[\prod_{i=1}^{N_u} S_{ui}|\bN]\]
\[= 1 - \prod_{i=1}^{N_u} \mathbb{E}[S_{ui}|\bN]\]
\[= 1 - \prod_{i=1}^{N_u} C_{ui},\] since the $S_{ui}$ are independent and $ \bN$ is strongly
stationary.
To verify stationarity, let 
\[
C_u^n = \mathbb{P}(S_u = 1|N_v; |v|\leq n).
\] 
Then the sequence $(C_u^n)_{n \geq 1}$ is a uniformly
bounded martingale and so converges almost surely and in $L^2$ to a  limit
which must in fact be $C_u$. Now, we can write $C_u^n$ as
\begin{eqnarray}\label{***}
C_u^n &=& 1 - \prod_{i_1=1}^{N_u} C_{ui_1}^n\\
&=& 1 - \prod_{i_1=1}^{N_u}
\left(1 -\prod_{i_2=1}^{N_{u{i_1}}}\bigl(...(1-\prod_{i_{n-1-|u|}=1}^{N_{ui_1i_2\ldots i_{n-2-|u|}}}
(1-(\mu^1)^{N_{ui_1i_2\ldots i_{n-1-|u|}}}))...\bigr)\right)\nonumber\\
& \rightarrow& C_u \hbox{ a.s.}.\nonumber\\
\nonumber
\end{eqnarray}
This corresponds to starting the distributional recursion at level $n$ 
of the tree with unit masses at $\mu^1$. Now, $(C_u^n; u \in \tT)$ is
stationary since each $C_u^n$ is the same function of ${\bN}_u$,
which are themselves stationary. Since $C_u$ is the (almost sure) limit
of a sequence of stationary random variables, it follows that ${\tC}=
(C_u; u \in \tT)$ is stationary. Notice that the
conditional probability solution, ${\tC}$, is automatically endogenous
since $C_u$ is $\sigma(N_v; v \in \tT_u$)-measurable for
every $u \in \tT$ and hence $(C_u; |u| = n)$ is independent
of $(N_u; |u| \leq n-1)$. The independence of
the collection $(C_u; |u| = n)$ follows from the fact that the
$((S_u,{\bN}_u); |u| = n)$ are independent  .

Finally, notice that if $(L_u; u\in\tT)$ solve the RDE (\ref{RDE}) and
are integrable then $m\defto{\mathbb E} L_u$ must satisfy (\ref{mu}) and
hence must equal $\mu^1$. It now follows, that
 $L_u^n\defto {\mathbb E}[L_u|N_v; |v| \leq n]=C_u^n$, since at depth $n$, $L_u^n=\mu^1$ so that
$L_u^n$ also satisfies equation (\ref{***}) and hence must equal $C_u^n$. Now
$L_u^n\rightarrow L_u$ a.s. and so, if $L$ is endogenous then it must equal
$C$. This establishes that $C$ is the unique endogenous solution.
\end{proof}

\begin{remark}
Notice that if ${\textbf S}$ is endogenous then ${\tC} = {\textbf S}$ almost surely so
that if ${\textbf S}$ and ${\tC}$ do not coincide then ${\textbf S}$ cannot be endogenous.
\end{remark}

\section{The moment equation and uniqueness of solutions}\label{momsec} Many of the results proved in this
paper rely heavily on the analysis of equation (\ref{moment}) below.

\begin{thm}
Any invariant distribution for the RDE (\ref{RDE}) must have
moments $(m_n)_{n \geq 0}$ satisfying the equation
\begin{equation} \label{moment} H(m_n) - (-1)^nm_n = \sum_{k=0}^{n-1}
(^n_k)(-1)^km_k,\end{equation} where $m_n^{1+1/n} \leq m_{n+1}
\leq m_n$ and $m_0 = 1$.
\end{thm}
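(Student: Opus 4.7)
The plan is to compute the $n$-th moment of both sides of (\ref{RDE}) after rewriting it in the equivalent form $1 - X_u = \prod_{i=1}^{N_u} X_{ui}$. Raising this identity to the $n$-th power gives $(1-X)^n = \prod_{i=1}^N X_i^n$. The expectation of the left-hand side, by the binomial theorem and linearity, is $\sum_{k=0}^n \binom{n}{k}(-1)^k m_k$. On the right, conditioning on $N$ and using that the $X_i$ are iid with $n$-th moment $m_n$ and independent of $N$, the expectation becomes $\mathbb{E}[m_n^N] = H(m_n)$. Equating the two and isolating the $k = n$ summand on the left yields (\ref{moment}); the normalisation $m_0 = 1$ is automatic since $X^0 \equiv 1$.

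The upper bound $m_{n+1} \leq m_n$ is immediate from $X \in [0,1]$, which forces $X^{n+1} \leq X^n$ pointwise. For the lower bound $m_n^{1+1/n} \leq m_{n+1}$, I would apply H\"older's inequality to the trivial factorisation $X^n = X^n \cdot 1$ with conjugate exponents $p = (n+1)/n$ and $q = n+1$, obtaining
\[
m_n = \mathbb{E}[X^n \cdot 1] \leq \mathbb{E}[X^{n+1}]^{n/(n+1)} = m_{n+1}^{n/(n+1)},
\]
which rearranges to the stated bound. Equivalently, one can invoke Jensen's inequality for the convex function $t \mapsto t^{(n+1)/n}$ applied to the random variable $X^n$.

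The only point requiring minor care is the case where $N$ can take the value $\infty$, so that $H$ may carry an atom at infinity. One interprets $H(z) = \mathbb{E}[z^N]$ with the convention $z^\infty = \lim_{k\to\infty} z^k$ (so $0$ on $[0,1)$ and $1$ at $z = 1$); since the $X_i$ lie in $[0,1]$, the partial products $\prod_{i=1}^k X_i^n$ are monotone in $k$ and dominated convergence legitimises the identity $\mathbb{E}[\prod_{i=1}^N X_i^n] = H(m_n)$. Beyond this, the proof is a routine moment computation and I do not anticipate any genuine obstacle; the substantive content of the theorem is the identity (\ref{moment}) itself, which will presumably be exploited in the sequel.
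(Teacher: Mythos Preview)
Your proof is correct and follows essentially the same route as the paper: compute $\mathbb{E}[(1-X)^n]$ two ways---once via the RDE as $H(m_n)$, once via the binomial expansion---and read off the moment constraints from $X\in[0,1]$ and the monotonicity of $L^p$ norms (which you spell out via H\"older). Your additional remark on handling the $N=\infty$ case is a small elaboration the paper omits, but otherwise the arguments coincide.
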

\begin{proof}
Let $X$ be a random variable whose distribution is invariant for
the RDE and write $m_k = \mathbb{E}[X^k]$. Applying the RDE
(\ref{RDE}) to $(1-X)^n$ we have
\[\mathbb{E}[(1-X)^n] = \mathbb{E}[\prod_{i=1}^N X_i^n] = H(m_n).\]
On the other hand, by expanding $(1-X)^n$ we obtain
\[\mathbb{E}[(1-X)^n] = \mathbb{E}[\sum_{k=0}^n (^n_k)(-1)^k
X_i^k]\]
\[= \sum_{k=0}^n (^n_k)(-1)^k m_k,\] so that \[H(m_n) =
\sum_{k=0}^n (^n_k) (-1)^k m_k.\] The condition $m_{n+1} \leq m_n$
follows from the fact that the distribution is on $[0,1]$. The
other condition follows from the monotonicity of $L^p$ norms.
\end{proof}
As an example, if the random variable $N$ has generating function
$H(x) = x^2$ (i.e. $N \equiv 2$), the moment equation tells us
that
\[m_1^2 + m_1 - 1 = 0\] so that $m_1 = (\sqrt{5} - 1)/2$. For
$m_2$ we have \[m_2^2 - m_2 - (2 - \sqrt{5}) = 0\] so that $m_2 =
m_1$ or $m_1^2$ and so on. In fact the two possible moment
sequences turn out to be $m_0 = 1, m_n = (\sqrt{5}-1)/2$ for $n
\geq 1$ or $m_0 = 1, m_1 = (\sqrt{5} - 1)/2, m_n = m_1^n$ for $n
\geq 2$.
\newline
We suppose from now on that $H(0)=0$ and $H$ is strictly convex (so that
${\mathbb P}(2\leq N <\infty)>0$).
\newline We now state the main result of the paper.

\begin{thm} \label{main}
Let $S = (S_u; u \in \tT)$ and $C = (C_u; u \in
\tT)$ be, respectively, the discrete solution and
corresponding conditional probability solution to the RDE
(\ref{RDE}). Let $\mu^1 = \mathbb{E}[S_u]$. Then
\begin{enumerate}
\item S is endogenous if and only if $H'(\mu^1) \leq 1$; \item $C$
is the unique endogenous solution; \item the only invariant
distributions for the RDE (\ref{RDE}) are those of $S_\emptyset$
and $C_\emptyset$.
\end{enumerate}
\end{thm}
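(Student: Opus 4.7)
The plan is to prove the three parts in the order (2), (1), (3). Part (2) is already established in the preceding theorem, where $C$ was constructed as an endogenous solution and shown to be the unique endogenous one.

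For part (1) I exploit the novel observation highlighted in the introduction: since $S$ is $\{0,1\}$-valued and $C_\emptyset = \mathbb{E}[S_\emptyset \mid \bN]$, endogeny of $S$ is equivalent to $\mathbb{P}(C_\emptyset \in \{0,1\}) = 1$, which is in turn equivalent to $\mathbb{E}[C_\emptyset^2] = \mathbb{E}[C_\emptyset] = \mu^1$. The quantity $\mathbb{E}[C_\emptyset^2]$ is computed via the $L^2$-convergent martingale approximants $C_\emptyset^n$ from (\ref{***}); using $\mathbb{E}[C_\emptyset^n] = \mu^1$ for every $n$, the second moments $b_n = \mathbb{E}[(C_\emptyset^n)^2]$ satisfy the scalar recursion
\[
b_n = H(b_{n-1}) + 2\mu^1 - 1, \qquad b_0 = (\mu^1)^2,
\]
driven by the strictly convex, increasing map $\phi(b) = H(b) + 2\mu^1 - 1$ with $\phi'(\mu^1) = H'(\mu^1)$. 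Writing $F = H - \mathrm{id}$, the fixed points of $\phi$ solve $F(b) = F(\mu^1)$. Since $F$ is strictly convex with $F(0) = F(1) = 0$ and minimum at the unique $x^\ast \in (0,1)$ with $H'(x^\ast) = 1$: when $H'(\mu^1) \leq 1$, $\mu^1 \leq x^\ast$ and $\mu^1$ is the only fixed point of $\phi$ in $[0,\mu^1]$, so $b_n \uparrow \mu^1$ and $S$ is endogenous; when $H'(\mu^1) > 1$, a second fixed point $m_2^\sharp \in (0, x^\ast)$ exists, and the strict Jensen bound $H(\mu^1)^2 < H((\mu^1)^2)$ (which uses strict convexity of $H$) gives $F((\mu^1)^2) > F(\mu^1)$, placing $b_0 = (\mu^1)^2$ strictly below $m_2^\sharp$, so $b_n \uparrow m_2^\sharp < \mu^1$ and $S$ is not endogenous.

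For part (3) I work moment-by-moment from equation (\ref{moment}). Lemma \ref{invmsr} pins $m_1 = \mu^1$. The $n=2$ case rearranges to $F(m_2) = F(\mu^1)$, with at most two admissible roots: $\mu^1$ and the value $m_2^\sharp$ identified in part (1), which are realised by $S_\emptyset$ and $C_\emptyset$ respectively. For $n \geq 3$ I argue that $m_n$ is uniquely determined by $(m_0, \dots, m_{n-1})$: for odd $n$ the map $x \mapsto H(x) + x$ is strictly increasing on $[0,1]$; for even $n \geq 4$ the potentially two roots of (\ref{moment}) are separated by the admissibility constraints $m_n^{1+1/n} \leq m_{n+1} \leq m_n$ combined with inductively-propagated Jensen-type comparisons. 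Since a probability measure on $[0,1]$ is determined by its moments, there are at most two invariant distributions, and both are already known to be invariant.

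The main obstacle is the uniqueness step for even $n \geq 4$ in part (3): excluding the spurious second root of (\ref{moment}) requires a careful bootstrap of the moment-admissibility inequalities together with convexity-based comparisons propagated along the full moment hierarchy. This is likely the step where the promised extension of Warren's result is invoked; every remaining piece reduces to a one-dimensional analysis of a strictly convex map or a routine martingale/moment computation.
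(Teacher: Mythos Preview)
Your approach to Part~(1) is genuinely different from the paper's and considerably more elementary. The paper truncates $N$ to $N^n=\min(n,N)$, applies Warren's finite-state criterion (their Theorem~4.8) to the truncated RDE to obtain endogeny iff $H_n'(\mu_n^1)\le 1$, and then passes to the limit by proving $C^{(n)}_u\to C_u$ in $L^2$ (their Theorem~4.7). You bypass all of this by computing $\mathbb{E}[C_\emptyset^2]$ directly from the scalar recursion $b_n=H(b_{n-1})+2\mu^1-1$ for the second moments of the martingale approximants. This works, and it is a real simplification: the only external input is the $L^2$ martingale convergence already established in the preceding theorem.

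One small correction: your strict Jensen claim $H(\mu^1)^2<H((\mu^1)^2)$ does not follow from strict convexity of $H$. Writing $H(s)=\mathbb{E}[s^N]$ gives $H((\mu^1)^2)=\mathbb{E}[((\mu^1)^N)^2]\ge(\mathbb{E}[(\mu^1)^N])^2=H(\mu^1)^2$, with equality precisely when $N$ is a.s.\ constant; for $H(x)=x^2$ the inequality is an equality and $b_0=m_2^\sharp$ exactly. This does not damage the argument: you only need $b_0<\mu^1$, which is immediate from $b_0=(\mu^1)^2$, and then the convex dynamics of $\phi$ force $b_n\to m_2^\sharp$ from either side.

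For Part~(3) your speculation is misplaced. The paper handles the even $n\ge 4$ step in Lemma~4.3 by a one-line observation: once the second-moment branch $m_2<\mu^*$ is chosen, monotonicity of moments gives $m_n\le m_2<\mu^*$ for all $n\ge 2$, so only the root of $H(x)-x=\text{const}$ lying to the left of $\mu^*$ is admissible. No Jensen bootstrap and no Warren are needed here. In fact Warren's result is used in the paper \emph{only} for Part~(1), precisely the step you have made elementary; so in your route the extension of Warren's theorem is not invoked at all.
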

The proof of the theorem relies on several lemmas. For (1) we
extend a result of Warren \cite{Warren} by first truncating $N$
and then take limits.

First however, we give some consequences of the moment equation
(\ref{moment}):
\begin{lem}\label{momlem}
There are at most two moment sequences satisfying (\ref{moment}).
Moreover, the first moment $m^1$ is unique and equal to $\mu^1$,  $1>m^1>\frac{1}{2}$ and in the case
that $H'(m^1)\leq 1$ there is only one moment sequence satisfying
(\ref{moment}).

\end{lem}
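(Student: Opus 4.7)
The plan is to exploit (\ref{moment}) as a recursion determining $m_n$ from $m_0,\ldots,m_{n-1}$, with the parity of $n$ driving a dichotomy.

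\textbf{First moment and the constant sequence.} The $n=1$ case of (\ref{moment}) is $H(m_1)+m_1=1$, which is equation (\ref{mu}); Lemma~\ref{invmsr} gives the unique solution $m_1=\mu^1\in(0,1)$. Strict convexity of $H$ with $H(0)=0$, $H(1)=1$ forces $H(x)<x$ on $(0,1)$, so $1-m_1=H(m_1)<m_1$, whence $m_1>1/2$. Writing (\ref{moment}) as $F_n(m_n)=c_n$ with $F_n(x):=H(x)-(-1)^n x$ and $c_n$ depending only on the previous moments: for odd $n$, $F_n(x)=H(x)+x$ is strictly increasing on $[0,1]$ and uniquely determines $m_n$; for even $n$, $F_n(x)=H(x)-x$ is strictly convex and attains each value at most twice. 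Using $\sum_{k=1}^{n-1}\binom{n}{k}(-1)^k=-1-(-1)^n$, a direct check shows that the constant sequence $m_n\equiv\mu^1$ solves (\ref{moment}) for every $n$: this is the moment sequence of the discrete solution $\tS$ and supplies one of the (at most) two allowed sequences.

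\textbf{Case $H'(\mu^1)\leq 1$.} Let $x^*\in(0,1]$ minimise $H(x)-x$ on $[0,1]$ (so $H'(x^*)=1$, with the convention $x^*=1$ if $H'(1)\leq 1$); the hypothesis $H'(\mu^1)\leq 1$ is equivalent to $\mu^1\leq x^*$. I argue by induction that $m_n=\mu^1$ for all $n\geq 1$. The odd step is automatic. For even $n$, the two roots of $F_n(x)=H(\mu^1)-\mu^1$ lie on opposite sides of $x^*$ by convexity; one is $\mu^1\leq x^*$, the other is $\geq x^*\geq\mu^1=m_{n-1}$, and is forbidden by $m_n\leq m_{n-1}$ (except degenerately, when the two roots coincide). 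This leaves only the constant sequence.

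\textbf{Case $H'(\mu^1)>1$.} Now $\mu^1>x^*$, and $F_2(x)=1-2\mu^1$ admits a genuinely distinct second root $\tilde m_2\in(0,x^*)$. Any non-constant moment sequence must first deviate from $\mu^1$ at some even index $n_0$—odd indices are forced to stay at $\mu^1$ as long as all preceding moments do—and the deviation value is exactly $\tilde m_2$, since the right-hand side of (\ref{moment}) depends only on $\mu^1$ while the previous moments equal $\mu^1$. The remaining task is to show $n_0=2$: if $n_0\geq 4$, the hybrid sequence $(\mu^1,\ldots,\mu^1,\tilde m_2,\ldots)$ has to satisfy every subsequent Lyapunov and monotonicity constraint, but by direct propagation one finds that the even moment $m_{n_0+2}$ cannot simultaneously lie in $[0,1]$, satisfy $m_{n_0+2}\leq m_{n_0+1}$, and satisfy the Lyapunov lower bound $m_{n_0+2}\geq m_{n_0+1}^{(n_0+2)/(n_0+1)}$—the kind of computation illustrated for $H(x)=x^2$ in Section~\ref{momsec}. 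Once $m_2=\tilde m_2$ is forced, odd moments are uniquely pinned by the monotone-$F_n$ argument, and each subsequent even moment by $m_{n-1}<x^*$, which selects the unique root of $F_n(x)=c_n$ lying $\leq x^*$; this gives one further sequence. Hence at most two moment sequences exist. The chief technical obstacle is the propagation argument in this last step: ruling out delayed departures requires uniform control over how the discrepancy $\mu^1-m_n$ evolves along a hypothetical late-departure sequence and how it feeds into the Lyapunov side-conditions.
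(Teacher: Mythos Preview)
Your overall architecture matches the paper's: both pin $m_1=\mu^1$ via Lemma~\ref{invmsr}, obtain $m_1>\tfrac12$ from $H(x)<x$ on $(0,1)$ (the paper phrases this as $g(x)=H(x)-x\le 0$), use monotonicity of $H(x)+x$ to force uniqueness at odd steps, and use strict convexity of $g$ to get at most two roots at even steps, with the dichotomy governed by the position of $\mu^1$ relative to the minimiser $\mu^*$ of $g$. One small slip: you assert $H(1)=1$, but $N$ may take the value $\infty$, so only $H(1)\le 1$ is available; the inequality $H(x)<x$ on $(0,1)$ still follows from strict convexity and $H(0)=0$, so nothing is damaged.

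Where you part company with the paper is the ``delayed departure'' issue in the case $H'(\mu^1)>1$. You are right that (\ref{moment}) \emph{alone} does not forbid a sequence that sits at $\mu^1$ through index $n_0-1$ and then jumps to $\tilde m_2$ at some even $n_0\ge 4$; the paper simply takes the smaller root at step $2$ and then shows the remainder is forced and strictly decreasing, without commenting on this. But your proposed resolution via the Lyapunov side-condition at index $n_0+2$ is left as a heuristic---you yourself call it ``the chief technical obstacle'' and do not carry it out---and the one-step bound $(\mu^1)^{n_0/(n_0-1)}\le \tilde m_2$ at the departure index does \emph{not} exclude every $n_0\ge 4$ in general. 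There is a one-line fix that makes the propagation argument unnecessary: the lemma is about moments of an actual invariant \emph{distribution} on $[0,1]$, and for any $[0,1]$-valued $X$ the equality $\mathbb E[X]=\mathbb E[X^2]$ gives $\mathbb E[X(1-X)]=0$, hence $X\in\{0,1\}$ a.s.\ and $m_n\equiv\mu^1$ for all $n$. Thus $m_2=\mu^1$ already forces the constant sequence and no delayed departure can occur. With this observation in place, your argument and the paper's coincide.
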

\begin{proof}
Uniqueness of $\mu^1$ (the root of $f(m^1)=1$, where $f:t\mapsto H(t)+t$)
has already been shown in Lemma \ref{invmsr}. 
Now set
$$g(x)=H(x)-x,
$$
then $g$ is strictly convex on [0,1] with $g(0)=0$ and $g(1-)=H(1-)-1\leq 0$. Thus there are
at most two solutions of $g(x)=1-2m^1$. Since $m^1$ itself is a
solution,
it follows that $1-2m^1\leq 0$
and there is at most one other solution. There is another solution with
$m^2< m^1$ if and only if $m^1$ is greater than $\mu^*$, the argmin of $g$, and
this is clearly true if and only if $g'(m^1)>0\Leftrightarrow H'(m^1)>1$.

Suppose that this last inequality holds, so that there is a solution, $m^2$, of
$g(x)=1-2m^1$ with $m^2<\mu^*<m^1$. There is at most one solution of
$$
f(x)=1-3m^1+3m^2,
$$
and if it exists take this as $m^3$.
Similarly,
there is at most one solution of
$g(x)=1-4m^1+6m^2-4m^3$
to the left of $\mu^*$ and this is the only possibility for $m^4$.
Iterating the argument, we obtain at most one strictly decreasing sequence
$m^1,\ldots$.

\end{proof}

\subsection{The case of a bounded branching factor}
Recall that the random family size $N$ may take the value $\infty$.
\begin{lem} \label{properties} 
Define $N^n = min(n,N)$ and denote its generating function by $H_n$.
Then $N^n$ is bounded and 
\begin{enumerate} 
\item $H_n(s) \geq H(s)$ for all $s \in [0,1]$;
\item $H_n \rightarrow H$ uniformly on compact subsets of $[0,1)$; 
\item $H_n'\rightarrow H'$ uniformly on compact subsets of $[0,1)$.
\end{enumerate}
\end{lem}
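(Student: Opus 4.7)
The plan is to work directly from the power series representations. Write $p_k = \mathbb{P}(N=k)$ for $k \in \mathbb{Z}_+$ and $p_\infty = \mathbb{P}(N = \infty)$; with the standard convention $s^\infty = 0$ for $s \in [0,1)$ we have $H(s) = \sum_{k \geq 0} p_k s^k$ on $[0,1)$, and
$$H_n(s) = \sum_{k=0}^{n-1} p_k s^k + \mathbb{P}(N \geq n)\,s^n,$$
where the final term absorbs both the tail $\sum_{k \geq n} p_k$ and the mass $p_\infty$ at infinity. This explicit form is essentially all one needs.

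For (1), I would use the pointwise observation that $\min(n,N) \leq N$, so for any $s \in [0,1]$ we have $s^{\min(n,N)} \geq s^N$; taking expectations gives $H_n(s) \geq H(s)$. For (2), the same pointwise inequality shows that $s^{\min(n,N)} - s^N$ is supported on $\{N > n\}$ and bounded there by $s^n$, so $|H_n(s) - H(s)| \leq s^n$, which gives uniform convergence to $0$ on $[0,R]$ for any $R < 1$, hence on every compact subset of $[0,1)$.

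For (3), I would differentiate term-by-term and note that the partial sum $\sum_{k=1}^{n-1} k p_k s^{k-1}$ is common to both $H_n'$ and $H'$, leaving
$$H_n'(s) - H'(s) = n\,\mathbb{P}(N \geq n)\,s^{n-1} - \sum_{k \geq n} k p_k s^{k-1}.$$
On $[0,R]$ with $R < 1$ each piece is dominated by its value at $s = R$: the first by $n R^{n-1}$, which tends to $0$; the second by the tail $\sum_{k \geq n} k p_k R^{k-1}$ of the full series $H'(R) = \sum_{k \geq 1} k p_k R^{k-1}$, which converges because the power series for $H$ has radius of convergence at least $1$ (so its derivative converges on $[0,1)$ and the tails vanish).

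There is no real obstacle here; the only thing to be careful of is the bookkeeping for the atom at infinity in the definition of $N^n$. Since $\{N = \infty\} \subseteq \{N^n = n\}$, the mass $p_\infty$ contributes to the coefficient of $s^n$ in $H_n$, which is precisely what makes $H_n(1) = 1$ even when $H(1-) < 1$, and keeps all three statements clean.
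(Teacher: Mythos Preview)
Your proof is correct. In the main text the paper simply says ``We leave the proof to the reader,'' so formally there is nothing to compare against. The LaTeX source does, however, contain a discarded proof sketch (after the \texttt{end\{document\}}) that follows essentially the same route as you: for (1) it uses the pointwise inequality $N^n \leq N \Rightarrow s^{N^n} \geq s^N$ and takes expectations, and for (2)--(3) it bounds $\mathbb{E}\bigl|s^{N^n}-s^N\bigr|$ on the event $\{N>n\}$ by $\mathbb{P}(N>n)$ times a constant.

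Your argument is actually cleaner on one point. The paper's sketch concludes via $\mathbb{P}(N>n)\to 0$, which fails when $\mathbb{P}(N=\infty)>0$ --- a case the paper explicitly allows. Your bound $|H_n(s)-H(s)|\leq s^n$ (and, for the derivative, $nR^{n-1}$ plus the tail of the convergent series $H'(R)$) sidesteps this entirely, since it goes to zero uniformly on $[0,R]$ regardless of whether $N$ has mass at infinity. Your explicit bookkeeping of the atom $p_\infty$ in the coefficient $\mathbb{P}(N\geq n)$ of $s^n$ is exactly what makes this work.
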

We leave the proof to the reader.

The following lemma will be used in the proof of Theorem
\ref{conv}.
\begin{lem}\label{prop2}
Let $C_u^{(n)} = \mathbb{P}(S_u = 1|N^n_u; u \in \tT)$ denote
the conditional probability solution for the RDE (\ref{RDE}) with $N$ replaced by $N^n$. Let $\mu_n^k =
\mathbb{E}[(C_u^{(n)})^k]$ denote the corresponding $k$th moment and
let $\mu^k = \mathbb{E}[(C_u)^k]$. Let $\mu^*_n$ denote the argmin of
$g_n(x)\defto H_n(x)-x$ and let $\mu^2_{n,m}$ denote that root
of the equation,
\begin{equation} \label{modified} 
g_n(x) = 1 - \mu^1_m - \mu^1_n, 
\end{equation} 
which lies to the left of $\mu^*_n$ (i.e. the lesser of the two possible
roots). Then $\mu_n^k \rightarrow \mu^k$ for $k = 1,2$ and
$\mu^2_{n,m} \rightarrow \mu^2$ as $\min(n,m)\rightarrow \infty$.
\end{lem}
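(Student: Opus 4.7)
My strategy is to reduce each claimed convergence to a continuity-of-root statement for strictly convex one-variable equations, exploiting Lemma \ref{properties} (uniform convergence of $H_n, H_n'$ on compacta of $[0,1)$) together with Lemma \ref{momlem}.

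\textbf{First moments.} By Lemma \ref{invmsr}, $\mu_n^1$ and $\mu^1$ are the unique roots in $(0,1)$ of $K_n(x) = H_n(x)+x-1$ and $K(x) = H(x)+x-1$ respectively. The inequality $H_n \geq H$ gives $K_n(\mu^1) \geq 0$, and since $K_n$ is strictly increasing with $K_n(0) = -1$, it follows that $\mu_n^1 \leq \mu^1 < 1$. Thus $(\mu_n^1)$ is trapped in the compact set $[0,\mu^1] \subset [0,1)$ on which $K_n \to K$ uniformly, and any subsequential limit $\alpha$ must satisfy $K(\alpha) = 0$, forcing $\alpha = \mu^1$.

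\textbf{Second moments.} I would focus on the non-degenerate case $H'(\mu^1) > 1$; otherwise Lemma \ref{momlem} gives $\mu^2 = \mu^1$ and the analogous equality $\mu_n^2 = \mu_n^1$ holds eventually for the truncated problem, reducing to the first-moment case (and similarly for $\mu^2_{n,m}$). Set $g_n = H_n - \mathrm{id}$ and $g = H - \mathrm{id}$. Uniform convergence $H_n' \to H'$ and strict convexity imply that the argmins $\mu^*_n$ of $g_n$ exist in $(0,1)$ for all large $n$ and converge to $\mu^*$, since any subsequential limit $\alpha$ satisfies $H'(\alpha)=1$ and this equation has a unique solution. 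Because $H_n'(\mu_n^1) \to H'(\mu^1) > 1$, we have $\mu_n^1 > \mu^*_n$ eventually, so Lemma \ref{momlem} applied to the truncated problem identifies $\mu_n^2$ as the unique root of $g_n(x) = 1-2\mu_n^1$ lying in $[0,\mu^*_n)$. Choosing $\delta > 0$ with $\mu^2 < \mu^* - 2\delta$, strict convexity of $g$ gives $g'(x) \leq -c < 0$ on $[0,\mu^*-\delta]$, and uniform convergence yields $g_n'(x) \leq -c/2$ there for $n$ large. Combined with $g_n \to g$ uniformly on this interval and $1 - 2\mu_n^1 \to 1-2\mu^1 = g(\mu^2)$, a standard continuity-of-inverse argument for strictly monotone functions delivers $\mu_n^2 \to \mu^2$. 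Replacing $1-2\mu_n^1$ by $1 - \mu_n^1 - \mu_m^1$ (which also converges to $1-2\mu^1$ as $\min(n,m)\to\infty$) yields $\mu^2_{n,m} \to \mu^2$ by exactly the same reasoning.

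\textbf{Main obstacle.} The delicate step is verifying that we remain in the two-root regime for $g_n$ with a uniform gap: both $\mu_n^1 > \mu^*_n$ and $g_n(\mu^*_n) < 1 - 2\mu_n^1$ (and analogously for $1 - \mu_n^1 - \mu_m^1$) must hold with a definite separation for all large indices, so that the continuity-of-inverse argument produces a root genuinely to the left of $\mu^*_n$ rather than drifting to the other (spurious) root. Each inequality follows from the corresponding strict limiting version combined with the uniform convergences already established, but the bookkeeping deserves care.
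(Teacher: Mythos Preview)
Your approach is essentially the same as the paper's: both reduce the problem to stability of roots of the convex equations $H_n(x)+x=1$ and $H_n(x)-x=c_n$ under the uniform convergence of $H_n,H_n'$ supplied by Lemma~\ref{properties}. For $k=1$ the arguments are virtually identical (you phrase it via subsequential limits, the paper via monotonicity of $\mu_n^1$). For $k=2$ the paper takes a slightly cleaner topological route---any subsequential limit of $(\mu_n^2)$ must solve $g(x)=1-2\mu^1$, hence lies in $\{\mu^1,\mu^2\}$, and then the wrong root is excluded---whereas you run a more quantitative continuity-of-inverse argument with a uniform lower bound on $|g_n'|$ on a fixed subinterval.

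One small imprecision in your treatment of the boundary case $H'(\mu^1)\le 1$: you assert that ``$\mu_n^2=\mu_n^1$ holds eventually'', which amounts to $H_n'(\mu_n^1)\le 1$ eventually. This is immediate when $H'(\mu^1)<1$, but in the critical case $H'(\mu^1)=1$ the sign of $H_n'(\mu_n^1)-1$ is not a priori determined, so $\mu_n^2$ could in principle be the lesser root for infinitely many $n$. The paper's limit-point argument handles this painlessly: since $\mu^1=\mu^2$ there, the set of possible subsequential limits $\{\mu^1,\mu^2\}$ collapses to a single point and one concludes $\mu_n^2\to\mu^2$ without needing to pin down which root $\mu_n^2$ is. The same remark applies to $\mu^2_{n,m}$. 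Apart from this, your argument is sound.
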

\begin{proof}
For the case $k = 1$, consider the graphs of the functions $H_n(x)
+ x$ and $H(x) + x$. We have $H_n(x) \geq H(x)$ for all $x \geq 0$
and for all $n \geq 1$ so that $\mu_n^1$ is bounded above by
$\mu^1$ for every $n$, since $\mu^1_n$ and $\mu^1$ are respectively the
roots of 
$$
H_n(x)+x=1\hbox{ and }H(x)+x=1.
$$
Furthermore, since $H_n$ decreases to $H$
pointwise on $[0,1)$, it follows that the $\mu_n^1$ are
increasing. The $\mu_n^1$ must therefore have a limit, which we
will denote $\widehat{\mu}$. 

It follows from Lemma \ref{properties} that, since $\mu^1<1$,
$H_n(\mu_n^1) \rightarrow H(\widehat{\mu})$. Hence \[1 =
H_n(\mu_n^1) + \mu_n^1 \rightarrow H(\widehat{\mu}) +
\widehat{\mu},\] so that $\widehat{\mu}$ is a root of $H(x) + x =
1$. It follows, by uniqueness, that $\widehat{\mu} = \mu^1$. 
\newline
\newline
For the case $k = 2$
we consider the graphs of $g_n(x)$ and $g(x)$. 
We first show that $\mu_n^2 \rightarrow \mu^2$ and then that
$\mu_{n,m}^2 \rightarrow \mu^2$ as $\min(n,m) \rightarrow \infty$.
\newline
\newline To show that $\mu_n^2 \rightarrow \mu^2$ we argue that
$\mu^2$ is the only limit point of the sequence $(\mu_n^2)_{n \geq
1}$. Notice that, since $\mu_n^1 \rightarrow \mu^1$ and $\mu_n^2$
satisfies
\[H_n(\mu_n^2) - \mu_n^2 = 1 - 2\mu_n^1,\] the only possible limit
points of the sequence $(\mu_n^2)_{n \geq 1}$ are $\mu^1$ and
$\mu^2$. Now, either $\mu^1\leq \mu^*$, in which case $\mu^1=\mu^2$ or,
$\mu^2 \leq \mu^* < \mu^1<1$. In the latter case, it is easy to show
that $\mu^*_n\rightarrow \mu^*$ (by uniform continuity of $g_n'$) and
so, since $\mu^1_n\rightarrow \mu^1$ it follows that 
$$
\mu^1_n>\mu^*_n,
$$
for sufficiently large $n$, and hence 
$$\mu^2_n\leq \mu^*_n,
$$
for sufficiently large $n$. In either case,
the only possible limit point is $\mu^2$; since the
$\mu_n^2$ are bounded they must, therefore, converge to $\mu^2$.
\newline \newline 
We conclude the proof by showing that $\mu^2$ is
the only limit point of the sequence $(\mu^2_{n,m})$. Since
$\mu_m^1, \mu_n^1 \rightarrow \mu^1$ as $\min(n,m) \rightarrow
\infty$ and $\mu_{n,m}^2$ satisfies (\ref{modified}), the only
possible limit points of the sequence $(\mu_{n,m}^2)_{m,n \geq 1}$
are $\mu^1$ and $\mu^2$. 

Once more, consider the two cases:
$$
\mu^1\leq\mu^*\hbox{ and }\mu^1>\mu^*.
$$
In the first case, $\mu^1_n=\mu^2_n$, for sufficiently large $n$, so
that $\mu^2$ is the only limit point;
in the second case
$$
\mu^1=\liminf_n \mu^1_n> \mu^*=\limsup_n\mu^*_n,
$$
and since $\mu^2_{n,m}\leq \mu^*_n$, $\mu^1$ cannot be a limit point.
Thus, in either case, $\mu^2$ is the unique limit point and hence is the
limit.

\end{proof}
\begin{remark}Notice that the method of the proof can be extended to prove
that $\mu_n^k \rightarrow \mu^k$ for any $k$. 
\end{remark}

\begin{thm}\label{conv}
$C_u^{(n)}$ converges to $C_u$ in $L^2$.
\end{thm}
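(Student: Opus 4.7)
The plan is to reduce the statement to a single moment convergence. Since $C_u^{(n)}, C_u \in [0,1]$, we have
\[
\mathbb{E}[(C_u^{(n)}-C_u)^2]=\mu^2_n+\mu^2-2\rho_n, \quad \rho_n\defto\mathbb{E}[C_u^{(n)}C_u],
\]
and Lemma \ref{prop2} already gives $\mu^2_n\to\mu^2$, so the theorem is equivalent to showing $\rho_n\to\mu^2$. Both $C$ and $C^{(n)}$ are endogenous tree-indexed solutions (for the original and truncated family-size distributions respectively), hence each is a measurable function of $\bN = (N_v; v\in\tT)$, with $C^{(n)}$ a function of $\bN^n \defto (\min(n,N_v))_{v\in\tT}$. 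This realises $C$ and $C^{(n)}$ simultaneously on the single probability space carrying $\bN$, and the independence of subtrees makes the pairs $(C_{ui}^{(n)},C_{ui})_{i\geq 1}$ i.i.d.\ and independent of $N_u$, with marginal means $\mu^1_n, \mu^1$ and joint mean $\rho_n$.

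Expanding $C_u^{(n)}C_u = \bigl(1-\prod_{i=1}^{N_u^n}C_{ui}^{(n)}\bigr)\bigl(1-\prod_{i=1}^{N_u}C_{ui}\bigr)$, using $N_u^n\leq N_u$ to write the cross term as $\prod_{i=1}^{N_u^n}(C_{ui}^{(n)}C_{ui})\prod_{i=N_u^n+1}^{N_u}C_{ui}$, taking expectations, and applying $H_n(\mu^1_n)=1-\mu^1_n$ and $H(\mu^1)=1-\mu^1$, I would obtain the fixed-point equation
\[
\rho_n=\mu^1+\mu^1_n-1+\Phi_n(\rho_n), \quad \Phi_n(\rho)\defto\mathbb{E}\bigl[\rho^{N^n}(\mu^1)^{N-N^n}\bigr].
\]
Cauchy--Schwarz gives $\rho_n\leq\sqrt{\mu^2_n\mu^2}$, so $(\rho_n)$ is bounded and eventually stays in $[0,\mu^1)$. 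For each fixed $\rho<1$, $\Phi_n(\rho)\to H(\rho)$: the $\{N\leq n\}$ piece stabilises at $\rho^N$ and converges to $H(\rho)$ by monotone convergence, while the $\{N>n\}$ tail is bounded by $\rho^n\to 0$ (the atom at $N=\infty$, if any, contributes $0$ via the convention $(\mu^1)^\infty=0$). Monotonicity of $\rho\mapsto\Phi_n(\rho)$ then permits a sandwich $\Phi_n(\rho^*-\varepsilon)\leq\Phi_n(\rho_n)\leq\Phi_n(\rho^*+\varepsilon)$ for any subsequential limit $\rho^*$, giving $\Phi_n(\rho_n)\to H(\rho^*)$, and the fixed-point equation in the limit becomes $g(\rho^*)=1-2\mu^1$. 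By Lemma \ref{momlem} this forces $\rho^*\in\{\mu^1,\mu^2\}$, and the bound $\rho^*\leq\mu^2$ isolates $\rho^*=\mu^2$ (the degenerate case $\mu^2=\mu^1$ is immediate). Boundedness then yields $\rho_n\to\mu^2$.

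The main obstacle is the joint limit $\Phi_n(\rho_n)\to H(\rho^*)$: the integrand, the random variable plugged in, and the argument itself all vary with $n$, and what rescues us is the monotonicity of $\Phi_n$ in its argument together with the uniform bound $\rho_n\leq\sqrt{\mu^2_n\mu^2}$ keeping the sequence away from $1$. The potential atom of $N$ at $\infty$ is handled by the convention $(\mu^1)^\infty=0$, which is consistent because $\mu^1\in(\tfrac12,1)$.
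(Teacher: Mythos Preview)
Your argument is correct, and it follows a genuinely different route from the paper's. The paper proves that $(C_u^{(n)})_n$ is Cauchy in $L^2$ by estimating $E_{m,n}=\mathbb{E}[(C_u^{(m)}-C_u^{(n)})^2]$: because two truncations $N^m,N^n$ are involved, the cross term $\mathbb{E}[\prod_{i\leq N^m}C_{ui}^{(m)}\prod_{i\leq N^n}C_{ui}^{(n)}]$ cannot be computed exactly and is sandwiched between $H_m(r_{m,n})$ and $H_n(r_{m,n})$, yielding only the inequality $H_n(r_{m,n})-r_{m,n}\leq 1-\mu^1_m-\mu^1_n$. Having obtained an $L^2$ limit $L_u$, the paper must then identify $L$ with $C$ by verifying that $L$ is an endogenous tree-indexed solution, which requires a separate argument (including care with the event $\{N_\emptyset=\infty\}$) and an appeal to the uniqueness of the endogenous solution.

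Your approach sidesteps both complications. By coupling $C$ and $C^{(n)}$ through the common noise $\bN$ and comparing $C^{(n)}$ directly with $C$ rather than with another truncation, the inequality $N^n_u\leq N_u$ lets the cross term factor exactly, giving the \emph{equation} $\rho_n=\mu^1+\mu^1_n-1+\Phi_n(\rho_n)$ instead of an inequality. The subsequential-limit argument, combined with monotonicity of $\Phi_n$ and the Cauchy--Schwarz bound $\rho^*\leq\mu^2$, then pins down $\rho^*=\mu^2$ cleanly. No limit-identification step is needed, since you work with $C_u$ from the outset (its existence having been secured earlier in Theorem~3.5). The paper's Cauchy route has the mild conceptual advantage of not presupposing a target, but since it must invoke uniqueness of the endogenous solution to name the limit anyway, your direct computation is the more economical of the two.
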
 
\begin{proof}
Let $n \geq m$. Define $E_{m,n} = \mathbb{E}[(C_u^{(m)} - C_u^{(n)})^2]$.
Expanding this, we obtain
\[E_{m,n} = \mu_m^2 + \mu_n^2 - 2r_{m,n},\] where $r_{m,n} = \mathbb{E}[C_u^{(m)}C_u^{(n)}]$. On the other hand,
by applying the RDE (\ref{RDE}) once, we obtain 
\[E_{m,n} =
\mathbb{E}[(\prod_{i=1}^{N_u^n} C_{ui}^{(n)} -
\prod_{i=1}^{N_u^m}C_{ui}^{(m)})^2]\] 
\[= H_m(\mu_m^2) + H_n(\mu_n^2)
- 2\mathbb{E}[\prod_{i=1}^{N_u^m} C_{ui}^{(m)}\prod_{i=1}^{N_u^n}
C_{ui}^{(n)}].\] 
We can bound $E_{m,n}$ above and
below as follows: since each $C^k_{ui}$ is in [0,1] omitting terms from
the product above increases it, while adding terms decreases it. Thus,
since $n \geq m$, $N^n_u\geq N^m_u$, and so
replacing $N^n_u$ by $N^m_u$ in the product above increases it while
replacing $N^m_u$ by $N^n_u$ decreases it. Thus we get:
\[
H_m(\mu_m^2) + H_n(\mu_n^2) - 2H_m(r_{m,n}) \leq E_{m,n} \leq
H_m(\mu_m^2) + H_n(\mu_n^2) - 2H_n(r_{m,n}).
\] 
Using the upper
bound we have 
\[
2H_n(r_{m,n}) \leq H_m(\mu^2_m) + H_n(\mu^2_n) -
E_{m,n} = H_m(\mu^2_m) + H_n(\mu^2_n) - \mu^2_m - \mu^2_n +
2r_{m,n}.
\] 
The moment equation (\ref{moment}) tells us that
$H_m(\mu^2_m) - \mu^2_m = 1 - 2\mu^1_m$ and that $H_n(\mu^2_n) -
\mu^2_n = 1 - 2\mu^1_n$. 
Hence 
\[2H_n(r_{m,n}) \leq 1 - 2\mu^1_m +
\mu^2_m + 1 - 2\mu^1_n + \mu^2_n - \mu^2_m - \mu^2_n + 2r_{m,n},
\]
so that, on simplifying, 
\[H_n(r_{m,n}) - r_{m,n} \leq 1 - \mu^1_m
- \mu^1_n.\] 
Recall that the equation $H_n(x) - x = 1 - \mu^1_m -
\mu^1_n$ has (at most) two roots, the lesser of which we denoted
$\mu^2_{m,n}$. Let $\mu^1_{m,n}$ be the other (larger) root (or 1, if the second root does not exist). 
Then, since
$H_n(x) - x$ is convex,
$\mu_{n,m}^2 \leq r_{m,n} \leq \mu_{n,m}^1$ for all $m,n$ and
hence $\liminf_{m \rightarrow \infty} r_{m,n} \geq \mu^2$ since
$\mu_{n,m}^2 \rightarrow \mu^2$ by Lemma \ref{prop2}.
\newline \newline 
On the other hand, Holder's inequality tells us
that $r_{m,n} \leq \sqrt{\mu_m^2\mu_n^2}$ and so it follows that
$\limsup_{m \rightarrow \infty} r_{m,n} \leq \mu^2$ since
$\mu_m^2, \mu_n^2 \rightarrow \mu^2$ by Lemma \ref{prop2}. Hence
$r_{m,n} \rightarrow \mu^2$ as $n \rightarrow \infty$ and
\[E_{m,n} \rightarrow \lim_{m,n \rightarrow \infty} \mu_m^2 +
\mu_n^2 - 2r_{m,n} = \mu^2 + \mu^2 - 2\mu^2 = 0,\] showing that
$(C_u^{(n)})$ is Cauchy in $L^2$. It now follows, by the completeness
of $L^2$, that $C_u^{(n)}$ converges. Since $C_u^{(n)}$ is
$\sigma(N)$-measurable, the limit $L_u$ of the $C_u^{(n)}$ must also be
$\sigma(N)$-measurable for each $u$ and the collection $(L_{ui})_{i\geq 1}$ must be independent and 
identically distributed on [0,1], with common mean $\mu^1<1$. Moreover,
by strong stationarity of the $C^{(n)}$s, the  $L_u$s are strongly
stationary.

To verify that $L_{\emptyset}$ is the conditional
probability solution, notice that
\[1_{E_n} C_\emptyset^{(n)} = (1 - \prod_{i=1}^{N_\emptyset^n} C_i^{(n)}) 1_{E_n}\]
\[= (1 - \prod_{i=1}^{N_\emptyset} C_i^{(n)}) 1_{E_n},\] 
where
$E_n = \{N_\emptyset \leq n\}$. As $n \rightarrow \infty$, $E_n\uparrow E\defto (N<\infty)$; 
furthermore, since the $C_i^{(n)}$
converge in $L^2$, they do so in probability. We may assume without loss of generality, therefore,
that $C^{(n)}_i$ converges almost surely for each $i$ so that, in
the limit,
\begin{equation}\label{endo2}
1_E L_\emptyset = \lim 1_{E_n}C^{(n)}_\emptyset =\lim 1_{E_n}
(1 - \prod_{i=1}^{N_\emptyset} C^{(n)}_i)=1_E(1-\prod_{i=1}^{N_\emptyset}L_i) \hbox{ a.s.}
\end{equation}
It is easy to show that 
$$\prod_{i=1}^\infty L_i=0\hbox{ a.s.}
$$
while
$$1_{E^c}C^{(n)}_\emptyset=1_{E^c}(1-\prod_{i=1}^{(n)}C^{(n)}_i) \rightarrow
1_{E^c}\hbox{ a.s.},
$$
so that 
\begin{equation}\label{endo3}
1_{E^c}L_\emptyset=\lim 1_{E^c}C^{(n)}_\emptyset=1_{E^c}.
\end{equation}
Thus 
adding equations (\ref{endo2}) and (\ref{endo3}) we see that
$$
L_\emptyset=(1-\prod_{i=1}^{N_\emptyset}L_i),$$
and so
$L$ is an endogenous solution to the RDE. It follows from uniqueness that $L$
must be the conditional probability solution $C$.
\end{proof}
\subsection{Proof of Theorem \ref{main}}We are now nearly in a position to finish proving Theorem \ref{main}.
To recap, we have shown in Lemma \ref{momlem} that there are at most two
distributions which solve the RDE (\ref{DE2}), corresponding to the `moment
sequences' $\mu^1,\mu^1,\ldots$ and $\mu^1,\mu^2,\ldots$. The first of
these is the moment sequence corresponding to the distribution on
$\{0,1\}$ with mass $\mu^1$ at 1. The second may or may not be a true
moment sequence and is equal to the first if and only if $H'(\mu^1)\leq
1$. Moreover, there is only one endogenous solution, and this
corresponds to the conditional probability solution $C$, thus if we can show
that $C$ is not discrete (i.e. is not equal to $S$) whenever
$H'(\mu^1)> 1$ then we will have proved the result.

We need to recall
some theory from \cite{Warren}. Consider the recursion \[\xi_u =
\phi(\xi_{u0}, \xi_{u1},..., \xi_{u(d-1)}, \epsilon_u), \ \ \ \ u
\in \Gamma_d,\] where the $\xi_u$ take values in a finite space
${\mathcal S}$, the ``noise" terms $\epsilon_u$ take values in a space $E$,
$\Gamma_d$ is the deterministic $d$-ary tree and $\phi$ is
symmetric in its first $d-1$ arguments. We suppose that the
$\epsilon_u$ are independent with common law $\nu$ and that there
exists a measure $\pi$ which is invariant for the above recursion
(i.e. $\pi$ is a solution of the associated RDE). Let $u_0 =
\emptyset, u_1, u_2, ...$ be an infinite sequence of vertices
starting at the root, with $u_{n+1}$ being a daughter of $u_n$
for every $n$. For $n \leq 0$, define $\xi_n = \xi_{u_{-n}}$. Then,
under the invariant measure $\pi$, the law of the sequence $(\xi_n;
n \leq 0)$, which, by the symmetry of $\phi$ does not depend on
the choice of sequence of vertices chosen, is that of a stationary
Markov chain. Let $P^2$ be the transition matrix of a Markov chain
on ${\mathcal S}^2$, given by
\[P^2((x_1,x_1'), A \times A') = \int_{\mathcal S} \int_E
1_(\phi(x_1,x_2, ..., x_d, z) \in A, \phi(x_1', x_2, ...,
x_d, z) \in A') d\nu(z)d\pi(x_2)...d\pi(x_d).\] Let $P^-$ be the
restriction of $P^2$ to non-diagonal terms and $\rho$ the Perron-
Frobenius
eigenvalue of the matrix corresponding to $P^-$. 
\newline \newline
The following theorem gives a necessary and sufficient condition
for endogeny of the tree-indexed solution corresponding to $\mu$. 
This is a small generalisation of Theorem 1 of \cite{Warren}
\begin{thm} 
\label{endthm1} 
The tree-indexed solution to the RDE associated
with
\[\xi_u = \phi(\xi_{u0}, \xi_{u1},..., \xi_{u(d-1)}, \epsilon_u),\]
corresponding to the invariant measure $\pi$, is endogenous if $d\rho
< 1$; it is {\em non-}endogenous if $d\rho
>1$. 
In the critical case $d\rho = 1$, let $\mathcal{H}_0$
be the collection of $L^2$ random variables measurable with respect to
$\xi_\emptyset$ and let $\mathcal{K}$ denote the $L^2$ random
variables measurable with respect to $(\epsilon_u; u \in
\Gamma_d)$. Then endogeny holds in this case provided $P^-$ is
irreducible and $\mathcal{H}_0 \cap \mathcal{K}^\perp = \{0\}$.
See \cite{Warren} for full details.
\end{thm}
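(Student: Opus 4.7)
The plan is to follow Warren \cite{Warren}, rephrasing endogeny as bivariate uniqueness and analysing the off-diagonal dynamics through the spectral radius $\rho$ of $P^-$. By the general Aldous-Bandyopadhyay framework \cite{Aldous}, endogeny of the tree-indexed solution with marginal $\pi$ is equivalent to \emph{bivariate uniqueness}: on the same tree with the same noise $(\epsilon_u; u \in \Gamma_d)$ but two independent copies of the offspring marks for each of the two coordinates, the only invariant measure on ${\mathcal S}^2$ with both marginals equal to $\pi$ should be the diagonal measure. The kernel $P^2$ is precisely the one-step bivariate transition for this coupled recursion when a single sibling slot carries a discrepancy and the remaining $d-1$ slots are coupled on the diagonal under $\pi$, and $P^-$ is its restriction to off-diagonal pairs.

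The key linearisation is that an off-diagonal perturbation concentrated on a single leaf, propagated one level upward, has its total mass multiplied (to leading order) by the operator $dP^-$: a discrepancy can sit in any of the $d$ sibling positions, each contributing an upward copy of $P^-$. Iterating $n$ times therefore gives decay or growth like $(d\rho)^n$. In the subcritical regime $d\rho < 1$ this forces any bivariate invariant measure to concentrate on the diagonal, so endogeny holds. In the supercritical regime $d\rho > 1$, the Perron eigenfunction of $P^-$ furnishes a seed which, via an iterate-and-renormalise construction on the compact space of bivariate laws with fixed marginal $\pi$, produces a nontrivial off-diagonal invariant measure, so endogeny fails.

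The critical case $d\rho = 1$ is the main obstacle. Here the linearised operator is not strictly contractive and both additional hypotheses play a substantive role. Irreducibility of $P^-$ ensures that its Perron eigenspace is one-dimensional, so any nontrivial off-diagonal $L^2$-invariant object must be a scalar multiple of the top eigenfunction. The condition $\mathcal{H}_0 \cap \mathcal{K}^\perp = \{0\}$ then excludes such a nonzero multiple: it says that no nontrivial $L^2$-function of $\xi_\emptyset$ is orthogonal to the noise $\sigma$-algebra, and projecting the candidate off-diagonal invariant onto $\mathcal{K}$ using the martingale provided by conditioning on $(\epsilon_u; |u| \leq n)$ forces it to vanish, yielding endogeny. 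The generalisation from Warren's original setting is essentially routine in the sub- and supercritical regimes, but genuinely uses the higher-dimensional Perron-Frobenius structure of $P^-$ in the critical regime; this, together with the $L^2$-projection argument, is where I expect the main technical work to lie.
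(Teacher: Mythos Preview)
The paper does not actually prove this theorem: it states it as ``a small generalisation of Theorem 1 of \cite{Warren}'' and directs the reader to \cite{Warren} for full details, using the result as a black box in the subsequent Theorem~\ref{endthm2}. So there is no in-paper proof to compare against; your sketch is an attempt to reconstruct Warren's argument, which is precisely what the paper defers to.

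That said, a brief assessment of your sketch. The overall architecture---translate endogeny into bivariate uniqueness, linearise the off-diagonal dynamics, and read off (non-)endogeny from whether $d\rho$ is below or above $1$---is the right one and is indeed Warren's approach. Two points of imprecision are worth flagging. First, your description of $P^2$ is slightly off: as defined here, $P^2$ is the transition kernel of the \emph{pair process along a single distinguished line of descent}, with the remaining $d-1$ sibling inputs drawn i.i.d.\ from $\pi$ (not ``coupled on the diagonal''); the factor $d$ then enters because the discrepancy could occupy any one of the $d$ daughter slots, which is how $d\rho$ emerges. Second, in the critical case your account is more heuristic than the actual argument: Warren's proof works with the $L^2$ decomposition induced by the filtration of the noise and shows that the off-diagonal component of a putative non-diagonal invariant, projected onto $\mathcal H_0$, must lie in $\mathcal K^\perp$; the hypothesis $\mathcal H_0\cap\mathcal K^\perp=\{0\}$ then kills it. Your ``iterate-and-renormalise on the compact space of bivariate laws'' for the supercritical construction is also a gloss; a cleaner route is to exhibit directly a non-diagonal bivariate invariant using the Perron eigenvector of $P^-$. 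None of these are fatal, but if you were to write this up you would need to tighten each of them.
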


\begin{thm} \label{endthm2}
Consider the RDE \begin{equation} \label{truncRDE} 
X_u = 1 -
\prod_{i=1}^{N^n_u} X_{ui}.
\end{equation} 
Then, by Lemma
\ref{invmsr}, there exists an invariant probability measure on
$\{0,1\}$ for (\ref{truncRDE}). Let $\mu_n^1$ denote the
probability of a 1 under this invariant measure. Then the
corresponding tree-indexed solution is endogenous if and only if
$H_n'(\mu_n^1) \leq 1$.
\end{thm}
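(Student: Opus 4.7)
The plan is to invoke Theorem \ref{endthm1} after recasting the truncated RDE as a problem on the deterministic $n$-ary tree $\Gamma_n$. At each node $u$ I enlarge the noise from $N^n_u$ to $\epsilon_u = (N^n_u,\sigma_u)$, where $\sigma_u$ is, conditionally on $N^n_u=k$, a uniformly random $k$-element subset of $\{1,\dots,n\}$, with the $(\epsilon_u)$ i.i.d. On $\Gamma_n$ the RDE becomes
\[
\xi_u = \phi(\xi_{u1},\dots,\xi_{un},\epsilon_u), \qquad \phi(x_1,\dots,x_n,\epsilon) = 1 - \prod_{i\in\sigma}x_i,
\]
and $\phi$ is symmetric in $x_1,\dots,x_n$ as required by Warren's setup. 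The Warren-type solution $(\xi_u)$ couples with the truncated discrete solution $(S_u)$ by identifying the live children of $u$ with the positions indexed by $\sigma_u$. Because $(\sigma_u)$ is independent of the randomness that determines $(S_u)$ given $(N^n_v)$, a short independence argument (if $X$ is independent of $Z$ given $Y$ and $X=f(Y,Z)$, then $X$ is a.s.\ a function of $Y$ alone) shows that Warren endogeny (measurability w.r.t.\ $(\epsilon_v)$) is equivalent to endogeny of the truncated discrete solution (measurability w.r.t.\ $(N^n_v)$).

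Next I compute the Perron--Frobenius eigenvalue $\rho$ of $P^-$ on the off-diagonal set $\{(0,1),(1,0)\}$. A direct case analysis shows $P^-$ is a symmetric $2\times 2$ matrix of the form $\left(\begin{smallmatrix}0 & r\\ r & 0\end{smallmatrix}\right)$: when $1\notin\sigma_\emptyset$ the outputs $\phi(0,\cdot)$ and $\phi(1,\cdot)$ coincide (so the pair collapses onto the diagonal), whereas when $1\in\sigma_\emptyset$ they differ iff every other live child carries a $1$ (in which case the pair flips to the other off-diagonal state). Thus
\[
r = \mathbb{P}\bigl(1\in\sigma_\emptyset,\ \text{every other live child carries a }1\bigr) = \sum_{k=1}^n p_k\,\frac{k}{n}\,(\mu_n^1)^{k-1} = \frac{H_n'(\mu_n^1)}{n},
\]
using $\mathbb{P}(1\in\sigma_\emptyset\mid N^n_\emptyset=k)=k/n$ and the fact that, conditionally, the other $k-1$ live children are i.i.d.\ Bernoulli$(\mu_n^1)$. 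Hence $\rho=r$ and $d\rho = n\rho = H_n'(\mu_n^1)$, so Theorem \ref{endthm1} immediately yields endogeny when $H_n'(\mu_n^1)<1$ and non-endogeny when $H_n'(\mu_n^1)>1$.

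The main obstacle is the boundary case $H_n'(\mu_n^1)=1$, where the secondary hypotheses of Theorem \ref{endthm1} must be checked. Irreducibility of $P^-$ is immediate since $r>0$. For the orthogonality condition $\mathcal{H}_0\cap\mathcal{K}^\perp=\{0\}$, observe that $\mathcal{H}_0=\mathrm{span}\{1,S_\emptyset\}$; a nonzero element $a+bS_\emptyset$ of $\mathcal{K}^\perp$ would force $C_\emptyset^{(n)}=\mathbb{E}[S_\emptyset\mid\mathcal{K}]$ to be a.s.\ constant, necessarily equal to $\mu_n^1$. The endogenous RDE for $C^{(n)}$ would then read $\mu_n^1 = 1-(\mu_n^1)^{N^n_\emptyset}$ a.s., forcing $N^n_\emptyset$ to be deterministic; a short direct computation using $(\mu_n^1)^k=1-\mu_n^1$ then shows $H_n'(\mu_n^1)\neq 1$ for any deterministic $N^n\equiv k\ge 2$, so this case cannot arise and $\mathcal{H}_0\cap\mathcal{K}^\perp=\{0\}$ holds. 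Assembling the three cases completes the proof of Theorem \ref{endthm2}. The symmetrization via $\sigma_u$ is the key device: without it $\phi$ would fail Warren's symmetry hypothesis, and the factor $k/n$ responsible for the clean identification $d\rho = H_n'(\mu_n^1)$ would be absent.
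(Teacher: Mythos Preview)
Your approach is the paper's approach: embed the truncated problem in a deterministic $d$-ary tree, apply Warren's criterion (Theorem~\ref{endthm1}), and identify $d\rho$ with $H_n'(\mu_n^1)$. The two proofs differ only in execution.

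First, your symmetrization via the random subset $\sigma_u$ makes explicit what the paper leaves implicit. The paper declares that ``the first $N$ are alive'' and then computes the probability that the distinguished child is alive as $N/N^*$; that probability is only correct if the live positions are uniformly randomized, which is precisely your $\sigma_u$. Your device also forces you to justify the equivalence of Warren endogeny (with the enlarged noise $(N^n_v,\sigma_v)$) and the original endogeny (with noise $(N^n_v)$ only), which you do via a conditional-independence lemma; the paper simply elides this step.

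Second, in the critical case $H_n'(\mu_n^1)=1$ the arguments diverge. The paper tests a putative nonzero $X\in\mathcal H_0\cap\mathcal K^\perp$ against $Y_k=1_{(N^n_\emptyset=k)}$, computes $\mathbb E[XY_k]=\mathbb P(N^n=k)\bigl(1-(\mu_n^1)^k/(1-\mu_n^1)\bigr)$, and observes that these cannot all vanish once $N^n$ takes at least two values with positive probability. You instead note that $X\in\mathcal K^\perp$ forces $C^{(n)}_\emptyset=\mathbb E[S_\emptyset\mid\mathcal K]$ to be a.s.\ constant, hence $N^n_\emptyset$ deterministic, and then exclude each constant value $k\ge 2$ by checking that $H_n'(\mu_n^1)>1$ there. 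Both work, but note that you must also exclude $k=1$: this uses the standing hypothesis that $H$ is strictly convex (so $\mathbb P(N\ge 2)>0$) together with $n\ge 2$, exactly the ``for sufficiently large $n$'' caveat that the paper inserts at the corresponding point. Without it, $N^n\equiv 1$ gives a genuine non-endogenous critical case.
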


\begin{proof}
Let $N^* = ess \sup N < \infty$ be a bound for $N$. We can then
think of the random tree with branching factor $N$ as being
embedded in a $N^*$-ary tree. Each vertex has $N^*$ daughter
vertices and the first $N$ of these are thought of as being
{\em alive} (the remaining being {\em dead}). In this context our RDE
reads 
\[X = 1 - \prod_{live \ u} X_u.\] 
We now compute the
transition probabilities from the previous theorem. Consider first
the transition from $(0,1)$ to $(1,0)$. The first coordinate
automatically maps to 1 and the second maps to 0 provided all of
the inputs not on the distinguished line of descent are equal to
1. The conditional probability of the vertex on the distinguished
line of descent being alive is $N/N^*$ since there are $N^*$
vertices, of which $N$ are alive. The probability of the remaining
$N-1$ vertices each taking value 1 is $(\mu_n^1)^{N-1}$ and so the
probability of a transition from $(0,1)$ to $(1,0)$, conditional on $N$,
is just
\[
1_{(N \geq 1)} \frac{(\mu_n^1)^{N-1}N}{d}.
\] 
Taking expectations, the required probability is 
\[
\mathbb{E}[1_{(N \geq 1)} \frac{(\mu_n^1)^{N-1}N}{N^*}] =
\frac{\mathbb{E}[1_{(N \geq 1)} N (\mu_n^1)^{N-1}]}{N^*} =
\frac{H_n'(\mu_n^1)}{N^*}.
\] 
The probability of a transition from
$(1,0)$ to $(0,1)$ is the same by symmetry. Hence $P^-$ is given by 
\[P^- = \left(%
\begin{array}{cc}
  0 & \frac{H_n'(\mu_n^1)}{N^*} \\
  \frac{H_n'(\mu_n^1)}{N^*} & 0 \\
\end{array}%
\right),\] and the Perron-Frobenius eigenvalue $\rho$ is
$\frac{H_n'(\mu_n^1)}{N^*}$. By Theorem \ref{endthm1}, the
criterion for endogeny is $N^*\rho \leq 1$, i.e. $H_n'(\mu_n^1)
\leq 1$, provided that, in the critical case $H_n'(\mu^1_n) = 1$,
we verify the stated non-degeneracy conditions. \newline
\newline It is easily seen that $P^-$ is irreducible. For the other
criterion, let $X \in \mathcal{H}_0 \cap \mathcal{K}^\perp$ so
that $X = f(X_\emptyset)$ for some $L^2$ function $f$ and
$\mathbb{E}[XY] = 0$ for all $Y \in \mathcal{K}$. Taking $Y = 1$,
we obtain $\mathbb{E}[X] = 0$. Writing $X$ as \[X = a
1_{(X_\emptyset = 1)} + b1_{(X_\emptyset = 0)},\]
where $a,b$ are constants, we obtain \[X = a
1_{(X_\emptyset = 1)} - \frac{a \mu_n^1}{1-\mu_n^1}
1_{{1}(X_\emptyset = 0)}.\] For convenience we will scale by
taking $a = 1$ (we assume that $X \neq 0$): \[X =
1_{(X_\emptyset = 1)} - \frac{ \mu_n^1}{1-\mu_n^1}
1_{(X_\emptyset = 0)}.
\] 
Now, for each $k$ take $Y_k =
1_{(N_\emptyset = k)} \in \mathcal{K}$. Then
\[
\mathbb{E}[XY_k] = \mathbb{E}[1_{(N_\emptyset =k)}
(1_{(X_\emptyset = 1)} - \frac{\mu_n^1}{1-\mu_n^1}
1_{(X_\emptyset = 0)})]
\] 
\[ 
= \mathbb{P}(N = k)[1 - (\mu_n^1)^k - \frac{(\mu_n^1)^{k+1}}{1 -\mu_n^1}]
\]
\[ 
=\mathbb{P}(N = k)(1 - \frac{(\mu_n^1)^k}{1 -\mu_n^1}) .
\] 
Now if we sum this expression over $k$ we get $1-\frac{H_n(\mu^1_n)}{1-\mu_n^1}=0$.
So either each term in the sum is zero or one or more are
not. But at least two of the probabilities are non-zero by assumption
(at least for sufficiently large $n$)
whilst the term $(1 - \frac{(\mu_n^1)^k}{1 -
\mu_n^1})$ can only disappear for at most one choice of $k$. Hence at least one of the terms is non-zero and
this contradicts the assumption that $X\in \mathcal{H}_0 \cap \mathcal{K}^\perp$.
\end{proof}

{\em Proof of the remainder of Theorem \ref{main}} We prove that $H'(\mu^1) > 1$ implies $\textbf S$ is not
endogenous so that $\tC$ cannot equal $\textbf S$. 
\newline \newline By Theorem \ref{endthm2} we know
that the RDE (\ref{truncRDE}) has two invariant distributions if
and only if $H_n'(\mu^1_n) > 1$. But we know that $C_u^{(n)}$
converges to $C_u$ in $L^2$ and hence $\mu_n^2 \rightarrow \mu^2
\neq \mu^1$ so that $S_u$ and $C_u$ have different second moments. It
now follows that $S_u$  does not have
the same distribution as $C_u$.  Since $[0,1]$ is bounded, this sequence
of moments determines a unique distribution which is therefore
that of $C_u$: see Theorem 1 of Chapter VII.3 of Feller \cite{Book}
\hfill$\square$

\section{Basins of attraction} Now we consider the {\em basin of
attraction} of the endogenous solution. That is, we ask for what
initial distributions does the corresponding solution at root,
$X_\emptyset$, converge (in law) to the endogenous solution.
\begin{defn}
Let $\varsigma$ be the law of the endogenous solution. Suppose
that we insert independent, identically distributed random
variables with law $\nu$ at level $n$ of the tree and apply the
RDE to obtain the corresponding solution $X_u^n(\nu)$ (with law
$\T^{n-|u|}(\nu)$) at vertex $u$. 

The basin of
attraction $B(\pi)$ of any solution is given by 
\[B(\pi) = \{\nu \in
\P : \T^n(\nu) \weak \pi\},\] which is,
of course, equivalent to the set of distributions $\nu$ for which
$X_u^n(\nu)$ converges in law to a solution $X$ of the RDE, with law $\pi$.
\end{defn}
\subsection{The unstable case: $H'(\mu^1) > 1$}
\begin{lem}
Suppose that $H'(\mu^1) > 1$.
Then
$X_u^n(\nu) \Lto C_u$, the
endogenous solution, for any $\nu$ with mean $\mu^1$ other than the
discrete measure on
 $\{0,1\}$.
\end{lem}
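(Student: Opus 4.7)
The plan is to couple $X_u^n(\nu)$ and the endogenous solution $C_u$ on a common probability space, by using one copy of the Galton--Watson tree $\bN$ to define both: $C_u$ is the $\sigma(\bN_u)$-measurable endogenous solution, and $X_u^n(\nu)$ is obtained by assigning, independently of $\bN$, iid $\nu$-distributed marks at all vertices of depth $n$ and iterating the RDE up to $u$. Since both $X_u^n$ and $C_u$ take values in $[0,1]$,
\[
\mathbb{E}\bigl[(X_\emptyset^n - C_\emptyset)^2\bigr] = m_2^n + \mu^2 - 2 r_n,
\]
where $m_2^n = \mathbb{E}[(X_\emptyset^n)^2]$ and $r_n = \mathbb{E}[X_\emptyset^n C_\emptyset]$. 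The task reduces to showing that both $m_2^n$ and $r_n$ converge to $\mu^2$.

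Since $\nu$ has mean $\mu^1$ and $H(\mu^1) = 1 - \mu^1$, the mean $\mu^1$ is preserved by the RDE, so $\mathbb{E}[X_u^n] = \mu^1$ for every $u$ and $n$. Expanding $(1 - \prod X_i)^2$ and $(1 - \prod X_i)(1 - \prod C_i)$ and using that, across daughter subtrees, both the $X_{ui}^n$ and the coupled pairs $(X_{ui}^n, C_{ui})$ remain iid and independent of $N_u$ (the inserted $\nu$-marks are independent of $\bN$), a one-step recursion produces the \emph{same} map for both quantities:
\[
m_2^{n+1} = f(m_2^n), \qquad r_{n+1} = f(r_n), \quad \text{where } f(x) := H(x) + 2\mu^1 - 1.
\]
By the moment equation (\ref{moment}) at $n=2$, the fixed points of $f$ are precisely the two roots of $H(x) - x = 1 - 2\mu^1$, which are $\mu^1$ and $\mu^2$, and are distinct because $H'(\mu^1) > 1$.

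I would then analyse the dynamics of $f$ on $[0, \mu^1)$. The map is strictly convex and strictly increasing with $f(0) = 2\mu^1 - 1 > 0$ (Lemma \ref{momlem} gives $\mu^1 > 1/2$). The difference $f(x) - x$ is strictly convex with roots $\mu^2 < \mu^1$, so $f(x) > x$ on $[0, \mu^2)$ and $f(x) < x$ on $(\mu^2, \mu^1)$; together with monotonicity this forces every orbit of $f$ starting in $[0, \mu^1)$ to converge to $\mu^2$. Now $r_0 = (\mu^1)^2$ because the depth-$0$ inserted mark is independent of $\bN$ and hence of $C_\emptyset$; also $m_2^0 = \int x^2\, d\nu \in [(\mu^1)^2, \mu^1]$, and $m_2^0 = \mu^1$ is ruled out because $\mathbb{E}[X(1-X)] = 0$ would force $\nu$ to be supported on $\{0,1\}$ and hence, having mean $\mu^1$, to be the excluded discrete measure. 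Jensen applied to $C_\emptyset$ gives $(\mu^1)^2 \leq \mu^2$, so both starting values lie in $[(\mu^1)^2, \mu^1) \subset [0, \mu^1)$; consequently $m_2^n, r_n \to \mu^2$ and $\mathbb{E}[(X_\emptyset^n - C_\emptyset)^2] \to 0$.

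The main obstacle is the careful justification of the cross-moment recursion $r_{n+1} = f(r_n)$. The delicate point is that within each subtree $X_{ui}^n$ and $C_{ui}$ are nontrivially coupled through the shared tree, but across subtrees the pairs $(X_{ui}^n, C_{ui})_{i \geq 1}$ are iid and jointly independent of $N_u$; combined with strong stationarity (so $\mathbb{E}[X_1^n C_1] = r_{n-1}$), this yields $\mathbb{E}\bigl[\prod_{i=1}^{N_\emptyset} X_i^n C_i\bigr] = H(r_{n-1})$, from which the recursion is immediate. Once this is established, the convergence at a general vertex $u$ follows by applying the same argument in the subtree rooted at $u$.
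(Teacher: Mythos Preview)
Your proof is correct and follows essentially the same route as the paper: expand $\mathbb{E}[(X_u^n-C_u)^2]$ as $m_2^n-2r_n+\mu^2$, derive the common one-step recursion $x\mapsto 1-2H(\mu^1)+H(x)=H(x)+2\mu^1-1$ for both $m_2^n$ and $r_n$, identify its fixed points $\mu^1,\mu^2$, and use $H'(\mu^1)>1$ to conclude that any starting value strictly below $\mu^1$ is driven to $\mu^2$. The only cosmetic difference is that you compute $r_0=(\mu^1)^2$ exactly via independence of the inserted marks from $\bN$, whereas the paper simply bounds $r_0<\mu^1$ by Cauchy--Schwarz; both suffice.
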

\begin{proof}
Let $E_k = \mathbb{E}[X_u^n(\nu)^2]$, where $k = n - |u|$, and let
$r_k = \mathbb{E}[C_u X_u^n(\nu)]$. Then
\[\mathbb{E}[(X_u^n(\nu) - C_u)^2] = E_k - 2r_k + \mu^2.\]
Now, \[E_k = \mathbb{E}[(1 - 2\prod_{i=1}^{N_u} X_{ui}^n(\nu) +
\prod_{i=1}^{N_u} X_{ui}^n(\nu)^2)]\] \[= 1 - 2H(\mu^1) +
H(E_{k-1}).\] This is a recursion for $E_k$ with at most two fixed
points (recall that the equation $H(x) - x = constant$ has at most
two roots). Recalling the moment equation (\ref{moment}), these
are easily seen to be $\mu^1$ and $\mu^2$, the first and second moments
of the endogenous solution. We have assumed that $\nu$ is not the
discrete distribution and so its second moment (i.e. $E_0$) must
be strictly less than $\mu^1$. Now, under the assumption that
$H'(\mu^1)>1$, $\mu^1$ and $\mu^2$ lie either side
of the minimum $\mu^*$ of $H(x) - x $ and
$H'(\mu^*) = 1$ so that $H'(\mu^2) < 1$. Hence $\mu^2$ is the stable
fixed point
and it now follows that $E_k$ converges to $\mu^2$. 
\newline
\newline The recursion for $r_k$ is essentially the same as that
for $E_k$: \[\mu^2 - r_k = H(\mu^2) - H(r_{k-1}).\] This has
$\mu^1$ and $\mu^2$ as  fixed points and, since \[r_0 = \mathbb{E}[C_u
X_u(\nu)] \leq \sqrt{\mathbb{E}[C_u^2] \mathbb{E}[X_u(\nu)^2]} <
\sqrt{\mu^1 \mu^1} = \mu^1,\] we are in the same situation as with
$E_k$. That is, we start to the left of $\mu^1$ and, because
$H'(\mu^1) > 1$, we conclude that $\mu^1$ is repulsive and
it follows that $r_k$ converges to $\mu^2$ under the
assumptions of the lemma. Hence
\[\mathbb{E}[(X_u^n(\nu) - C_u)^2] = E_k - 2r_k + \mu^2
\rightarrow 0.\]
\end{proof}
\begin{thm} Let $\delta$ denote the discrete distribution on
$\{0,1\}$ with mean $\mu^1$. Then
\[B(\varsigma) = \{\nu \in \P: \int x d\nu(x) = \mu^1 \ and \ \nu \neq
\delta\}.\] That is, $B(\varsigma)$ is precisely the set of
distributions on $[0,1]$ with the correct mean (except the
discrete distribution with mean $\mu^1$).
\end{thm}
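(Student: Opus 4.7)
My plan is to prove the theorem by establishing both inclusions. The inclusion $\{\nu \in \P : \int x\, d\nu(x) = \mu^1,\ \nu \neq \delta\} \subseteq B(\varsigma)$ is immediate from the preceding lemma, since $L^2$ convergence of $X_u^n(\nu)$ to $C_u$ implies weak convergence of their laws.

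For the reverse inclusion I need to rule out $\nu = \delta$ and any $\nu$ whose mean differs from $\mu^1$. The case $\nu = \delta$ is handled quickly: by Lemma \ref{invmsr}, $\delta$ is invariant under $\T$, so $\T^n(\delta) = \delta$ for every $n$, and this differs from $\varsigma$ under the running assumption $H'(\mu^1) > 1$, as established in the proof of Theorem \ref{main}.

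For the mean condition I would derive the one-dimensional recursion governing $m_k = \int x\, d\T^k(\nu)(x)$. Taking expectations in the RDE (\ref{RDE}) applied to $X_u^n(\nu)$ gives $m_{k+1} = 1 - H(m_k)$. Writing $f(x) = 1 - H(x)$, note that $f$ is strictly decreasing on $[0,1]$ and $f(x) = x$ is equivalent to $H(x) + x = 1$, whose unique solution is $\mu^1$ (Lemma \ref{invmsr}). Hence $\mu^1$ is the unique fixed point of $f$, and strict monotonicity yields $f^{-1}(\{\mu^1\}) = \{\mu^1\}$, so if $m_0 \neq \mu^1$ then $m_k \neq \mu^1$ for all $k$. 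Now $\T^n(\nu) \weak \varsigma$ on the compact set $[0,1]$ forces $m_k \to \mu^1$ (take the bounded continuous test function $x \mapsto x$), so it suffices to show this cannot happen when $m_0 \neq \mu^1$.

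The core of the argument, and the only step that is not merely bookkeeping, is the repelling nature of $\mu^1$ for $f$: since $|f'(\mu^1)| = H'(\mu^1) > 1$, I would pick $c \in (1, H'(\mu^1))$ and a neighborhood $U$ of $\mu^1$ on which $|f'| \geq c$; by the mean value theorem, $|f(x) - \mu^1| \geq c|x - \mu^1|$ for $x \in U$. If $m_k \to \mu^1$, then $m_k \in U$ for all $k \geq K$ for some $K$, and iterating the inequality gives $|m_{K+n} - \mu^1| \geq c^n |m_K - \mu^1|$, which diverges (since $m_K \neq \mu^1$), contradicting $m_k \to \mu^1$. The main obstacle is simply packaging this local repulsion cleanly and confirming that the mean recursion really is the decisive obstruction; everything else follows from results already in hand.
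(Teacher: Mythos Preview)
Your proposal is correct and follows essentially the same approach as the paper: one inclusion comes from the preceding lemma, and the reverse inclusion is obtained by noting that weak convergence on $[0,1]$ forces convergence of means, while the mean recursion $m_{k+1}=1-H(m_k)$ has $\mu^1$ as a repelling fixed point when $H'(\mu^1)>1$. The paper simply asserts that $\mu^1$ is repulsive and that therefore convergence of means requires starting at $\mu^1$, whereas you spell out the local-repulsion argument via the mean value theorem and treat the exclusion of $\delta$ explicitly; these are just more detailed versions of the same steps.
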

\begin{proof}
We have already shown that \[\{\nu \in \P: \int x d\nu(x)
= \mu^1 \ and \ \nu \neq \delta)\} \subseteq B(\varsigma).\] 
Since the
identity is bounded on $[0,1]$, we conclude that
\[\mathbb{E}X_u^n(\nu) \rightarrow \mathbb{E}C_u,\hbox{ if }\nu\in
B(\varsigma),
\] so that $\nu \in B(\varsigma)$ only if the mean of $\T^n(\nu)$ converges to $\mu^1$.
From the moment equation (\ref{moment}), the mean of $X_u^n(\nu)$
is obtained by iterating the map $f$
$n$ times,
starting with the mean of $\nu$. This mapping has a unique fixed
point $\mu^1$ and, since $H'(\mu^1) > 1$, it is repulsive. It
follows that the only way we can have convergence in mean is if we
start with the correct mean, that is, if $\nu$ has mean $\mu^1$.
Hence
\[B(\varsigma) \subseteq \{\nu \in \P: \int x d\nu(x) = \mu^1\ \ and \ \nu \neq \delta\}.\]
\end{proof}
\subsection{The stable case: $H'(\mu^1) \leq 1$}
\begin{thm} \label{stable}
Let $b(\mu^1)$ be the basin of attraction of $\mu^1$ under the
iterative map for the first moment, $f:t \mapsto 1 - H(t)$. Then 
\[
B(\varsigma)
= \{\nu \in \P : \int xd\nu(x) \in b(\mu^1)\}.
\]
\end{thm}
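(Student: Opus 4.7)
I would split the proof into necessity and sufficiency. Necessity is immediate: if $\T^n(\nu) \weak \varsigma$ then applying this to the bounded continuous function $x$ gives $\int x\,d\T^n(\nu) \to \mu^1$. Since the one-step evolution of the mean under $\T$ is exactly $f : t \mapsto 1-H(t)$, we have $f^n\bigl(\int x\,d\nu\bigr) \to \mu^1$; that is, $\int x\,d\nu \in b(\mu^1)$.

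For the converse, assume $\int x\,d\nu \in b(\mu^1)$. Since $[0,1]$ is compact, weak convergence of measures on $[0,1]$ is equivalent to convergence of all moments. Writing $m_k^{(n)} := \int x^n\,d\T^k(\nu)(x)$, the plan is to prove by induction on $n$ that $m_k^{(n)} \to \mu^1$ (the common value of all positive-order moments of $\varsigma$) as $k \to \infty$. The base case $n=1$ is the hypothesis. For $n \geq 2$, applying $\mathbb{E}[(1-X)^n] = H(m_{k-1}^{(n)})$ (with $X$ of law $\T^k(\nu)$) and the binomial theorem, then solving for $m_k^{(n)}$, yields the recursion
\begin{equation*}
m_k^{(n)} = (-1)^n H(m_{k-1}^{(n)}) + L_k,
\end{equation*}
where $L_k$ is an explicit linear combination of $m_k^{(0)}, \ldots, m_k^{(n-1)}$. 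By the inductive hypothesis $L_k \to L$, and using $H(\mu^1) = 1 - \mu^1$ one checks that $\mu^1$ is a fixed point of the limit map $m \mapsto (-1)^n H(m) + L$.

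Write $\overline m = \limsup_k m_k^{(n)}$ and $\underline m = \liminf_k m_k^{(n)}$. Since $m_k^{(n)} \leq m_k^{(n-1)}$ on $[0,1]$ and $m_k^{(n-1)} \to \mu^1$ by induction, $\overline m \leq \mu^1 \leq \mu^*$, where $\mu^*$ is the argmin of $g(x) := H(x) - x$. Passing $\limsup$ and $\liminf$ through the recursion (using continuity and monotonicity of $H$) and rearranging gives, whether $n$ is even or odd, the single inequality
\begin{equation*}
H(\overline m) - H(\underline m) \geq \overline m - \underline m.
\end{equation*}
If $\overline m > \underline m$, the mean value theorem produces $\xi \in (\underline m, \overline m) \subseteq [0, \mu^1)$ with $H'(\xi) \geq 1$; but strict convexity of $H$ makes $H'$ strictly increasing, so $H'(\xi) < H'(\mu^1) \leq 1$, a contradiction. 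Hence $\overline m = \underline m$, and the common limit satisfies the fixed-point equation. By Lemma~\ref{momlem}, in the stable case the only admissible solution (namely $\leq \mu^1 \leq \mu^*$) is $\mu^1$ itself, completing the induction.

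The main obstacle is the critical case $H'(\mu^1) = 1$, where no local contraction around $\mu^1$ is available. The $\limsup/\liminf$ approach handles it uniformly, however, because strict convexity of $H$ still forces $H'(\xi) < H'(\mu^1) = 1$ for every $\xi < \mu^1$; hence the contradiction yielding $\overline m = \underline m$ survives in the critical regime, and the same argument then identifies the unique limit as $\mu^1$.
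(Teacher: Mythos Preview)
Your argument is correct and takes a genuinely different route from the paper. Necessity is handled identically. For sufficiency, the paper does not argue via moments of all orders; instead it shows the stronger statement $\mathbb{E}[(X_u^n(\nu)-C_u)^2]\to 0$ by tracking only the second moment $m^2_k$ and the cross term $r_k=\mathbb{E}[C_uX_u^n(\nu)]$, each satisfying a one–dimensional recursion with the \emph{non-autonomous} drift coming from $m^1_k$. To control these, the paper sandwiches $m^1_k$ in $(\mu^1-\epsilon,\mu^1+\epsilon)$ for $k\ge k_\epsilon$, builds a lower-bounding trajectory $l_k$ for the perturbed autonomous map $f_\epsilon(x)=1-2H(\mu^1+\epsilon)+H(x)$, proves $l_k\to\mu^1(\epsilon)$ (a stable fixed point of $f_\epsilon$), and then lets $\epsilon\downarrow 0$. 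Together with the trivial upper bound $m^2_k\le m^1_k$, this pins $m^2_k\to\mu^1$ (and likewise $r_k$), whence the $L^2$ distance collapses.

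Your approach replaces this $\epsilon$–perturbation machinery by a clean induction on the moment order, passing $\limsup/\liminf$ through the recursion and using only that $H'<1$ strictly to the left of $\mu^1$ (from strict convexity and $H'(\mu^1)\le 1$). Two small remarks: first, your displayed inequality is in fact an equality in both parities (so the MVT gives $H'(\xi)=1$, which is already the desired contradiction); second, the appeal to Lemma~\ref{momlem} at the end is slightly oblique --- what you actually use is that $t\mapsto H(t)+t$ is strictly increasing (odd case) and $t\mapsto H(t)-t$ is strictly decreasing on $[0,\mu^*]$ (even case), which pins the limit at $\mu^1$ once you know it lies in $[0,\mu^1]\subset[0,\mu^*]$. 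What your method buys is uniformity and economy (no auxiliary perturbed maps, critical case for free); what the paper's method buys is the stronger $L^2$ convergence $X_u^n(\nu)\to C_u$, rather than only weak convergence of laws via the method of moments.
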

Consider once again $\mathbb{E}[(X_u^n(\nu) - C_u)^2]$. Let
$m_k^\theta = \mathbb{E}X_u^n(\nu)^\theta$, where $k = n - |u|$.
Then
\[m^2_k = \mathbb{E}(1 - 2\prod_{i=1}^{N_u} X_{ui}^n(\nu) +
\prod_{i=1}^{N_u} X_{ui}^n(\nu)^2)\]
\[= 1 - 2H(m^1_{k-1}) + H(m^2_{k-1}).\] 
Recalling that $r_k = \mathbb{E}[C_u X_u^n(\nu)]$, we have
\[r_k = \mathbb{E}[(1 -
\prod_{i=1}^{N_u} C_{ui})(1 - \prod_{i=1}^{N_u} X_{ui}^n(\nu))]\]
\[= \mathbb{E}[(1 - \prod_{i=1}^{N_u} C_{ui} - \prod_{i=1}^{N_u} X_{ui}^n(\nu) +
\prod_{i=1}^{N_u} C_{ui}X_{ui}^n(\nu))]\] \[= 1 - H(\mu^1) - H(m^1_{k-1}) +
H(r_{k-1}).\] We now turn our attention to analysing the dynamics
of $m^2_k$ and $r_k$. We will concentrate on the equation for
$m^2_k$ as the equation for $r_k$ is essentially the same. By
assumption, $m^1_k$ converges to $\mu^1$ and so we may approximate
$m^1_k$, for $k \geq k_\epsilon$ (say), by $\mu^1 \pm \epsilon$,
for some small $\epsilon > 0$.
\begin{lem}
The trajectory $l_k$ of the dynamical system defined by the recursion 
\[
l_k = 1 -
2H(\mu^1 + \epsilon) + H(l_{k-1}), \;\;l_{k_\epsilon} =
m^2_{k_\epsilon},
\] 
is a lower bound for $m^2_k$ for all $k \geq
k_\epsilon$, where $k_\epsilon$ is a positive integer chosen so that
\[|m^1_k - \mu^1| < \epsilon, \hbox{ for } k \geq k_\epsilon.\]
\end{lem}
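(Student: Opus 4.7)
The plan is to prove $l_k \leq m_k^2$ for all $k \geq k_\epsilon$ by a straightforward induction, exploiting the monotonicity of the PGF $H$ on $[0,1]$.

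First I would note the two ingredients the argument needs. The recursion satisfied by $m_k^2$ was just derived in the paragraph preceding the lemma, namely $m_k^2 = 1 - 2H(m_{k-1}^1) + H(m_{k-1}^2)$, and the definition of $k_\epsilon$ guarantees $m_{k-1}^1 \leq \mu^1 + \epsilon$ whenever $k - 1 \geq k_\epsilon$. Since $H$ is a probability generating function it is (strictly) increasing on $[0,1]$, so $H(m_{k-1}^1) \leq H(\mu^1 + \epsilon)$, equivalently $-2H(m_{k-1}^1) \geq -2H(\mu^1 + \epsilon)$.

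The induction itself is then immediate. The base case $k = k_\epsilon$ holds with equality by the definition of $l_{k_\epsilon}$. For the inductive step, assume $l_{k-1} \leq m_{k-1}^2$; monotonicity of $H$ gives $H(l_{k-1}) \leq H(m_{k-1}^2)$, and combining this with the previous display,
\[
l_k = 1 - 2H(\mu^1+\epsilon) + H(l_{k-1}) \leq 1 - 2H(m_{k-1}^1) + H(m_{k-1}^2) = m_k^2,
\]
which closes the induction.

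There is really no serious obstacle here; the only thing to be mindful of is the direction of the inequality $m_{k-1}^1 \leq \mu^1 + \epsilon$ (one uses the upper half of $|m_{k-1}^1 - \mu^1| < \epsilon$ together with the fact that $m_{k-1}^1$ enters the recursion with a minus sign), so that inflating it to $\mu^1 + \epsilon$ indeed decreases the right-hand side and yields a lower bound rather than an upper bound. A symmetric remark, which presumably appears in the sequel, gives a matching upper-bound trajectory driven by $\mu^1 - \epsilon$, and sandwiching $m_k^2$ between the two will let one analyse its limiting behaviour.
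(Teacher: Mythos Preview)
Your induction argument is correct and is exactly the ``obvious'' proof the paper has in mind (the paper's own proof of this lemma consists of the single sentence ``The proof is obvious''). One small aside: your closing remark anticipates a symmetric upper-bound trajectory driven by $\mu^1-\epsilon$, but in fact the paper obtains the upper bound more simply from $m^2_k \leq m^1_k \to \mu^1$, without introducing a second auxiliary recursion.
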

The proof is obvious.

\begin{lem}
Let \[f_\epsilon(x) = 1 - 2H(\mu^1 + \epsilon) + H(x), \ \ \ \ \ \
x \in [0,1].\] Then, for sufficiently small $\epsilon > 0$,
$f_\epsilon$ has a unique fixed  point $\mu^1(\epsilon)$ for which
$\mu^1(\epsilon)<\mu^*$. Moreover, as $\epsilon\rightarrow 0$,
$\mu^1(\epsilon)\rightarrow \mu^1$.
\end{lem}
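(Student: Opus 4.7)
The plan is to reduce the fixed-point problem to a level-set problem for the convex function $g(x) = H(x) - x$ studied earlier in the paper. Writing out $x = f_\epsilon(x)$ gives
\[
g(x) \;=\; H(x) - x \;=\; 2H(\mu^1 + \epsilon) - 1 \;=:\; c(\epsilon),
\]
so fixed points of $f_\epsilon$ are precisely the preimages of the constant $c(\epsilon)$ under $g$. At $\epsilon = 0$, the identity $H(\mu^1) = 1 - \mu^1$ from Lemma \ref{invmsr} gives $c(0) = 1 - 2\mu^1 = g(\mu^1)$, which both pins down the fixed point at $\epsilon=0$ and reminds us, via $\mu^1 > \tfrac12$ (Lemma \ref{momlem}), that $c(0) < 0$.

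Next I would describe $g$ on $[0, \mu^*]$. Since $H$ is strictly convex with $H'(\mu^*) = 1$, the function $g$ is strictly convex with a minimum at $\mu^*$, strictly decreasing on $[0, \mu^*]$, and it maps this interval bijectively onto $[g(\mu^*), 0]$ because $g(0) = H(0) = 0$. The stable-case hypothesis $H'(\mu^1) \leq 1$ places $\mu^1$ in this interval: $\mu^1 \leq \mu^*$. Strict monotonicity of $g$ on $[0, \mu^*]$ immediately gives \emph{uniqueness} of a fixed point below $\mu^*$, once existence is secured.

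For existence, I would show that $c(\epsilon) \in (g(\mu^*), 0]$ for all sufficiently small $\epsilon > 0$, so that this value has a (unique) preimage $\mu^1(\epsilon) \in [0, \mu^*)$ under $g$. The upper bound follows because $c$ is continuous with $c(0) < 0$, hence $c(\epsilon) < 0$ for small $\epsilon$. The lower bound splits into two cases: if $\mu^1 < \mu^*$ strictly, then $c(0) = g(\mu^1) > g(\mu^*)$ and the inequality persists by continuity; in the critical case $\mu^1 = \mu^*$ we have $c(0) = g(\mu^*)$, but for $\epsilon > 0$ the strict monotonicity of $H$ yields $c(\epsilon) > c(0) = g(\mu^*)$. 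Finally, the convergence $\mu^1(\epsilon) \to \mu^1$ is just continuity of the inverse of $g|_{[0,\mu^*]}$ applied to $c(\epsilon) \to c(0) = g(\mu^1)$. The argument is short and contains no real obstacle; the only minor care-point is the critical case $\mu^1 = \mu^*$, where the limiting equation $g(x) = g(\mu^*)$ is tangential and one must use $\epsilon > 0$ to get $\mu^1(\epsilon)$ strictly to the left of $\mu^*$.
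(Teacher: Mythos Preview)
Your argument is correct and follows the same approach as the paper: rewrite the fixed-point equation as a level-set problem for the convex function $g(x)=H(x)-x$, use the stable-case hypothesis $H'(\mu^1)\leq 1$ to place $\mu^1\leq\mu^*$, and then appeal to continuity. Your write-up is in fact more complete than the paper's one-line sketch --- in particular your explicit handling of the critical case $\mu^1=\mu^*$ is exactly the care-point the sketch elides (and the paper's inequality ``$H(\mu^1+\epsilon)<H(\mu^1)$'' has the sign reversed).
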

\begin{proof}
This follows from uniform continuity, the fact that
$H(\mu^1+\epsilon)<H(\mu^1)$ and the the fact that $H'(\mu^1)\leq 1\Rightarrow \mu^1\leq\mu^*$.
\end{proof}

\begin{lem}
$l_k$ converges to $\mu^1(\epsilon)$.
\end{lem}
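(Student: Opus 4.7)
The plan is to exploit the fact that $f_\epsilon$ is continuous and monotone increasing on $[0,1]$ (being an affine shift of the probability generating function $H$). Consequently the iteration $l_k = f_\epsilon(l_{k-1})$ is monotone once its direction is fixed: if $l_{k_\epsilon+1} \geq l_{k_\epsilon}$ then, by induction, $(l_k)_{k\geq k_\epsilon}$ is non-decreasing, and otherwise it is non-increasing. A monotone bounded sequence in $[0,1]$ converges, and by continuity of $f_\epsilon$ its limit must be a fixed point.

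The next step is to identify the fixed points of $f_\epsilon$. These are the roots of $H(x) - x = 2H(\mu^1+\epsilon) - 1$ in $[0,1]$; by strict convexity of $g(x) \defto H(x) - x$ there are at most two, with the smaller being $\mu^1(\epsilon) < \mu^*$ by the previous lemma, and (if it exists) a second root $\tilde\mu_\epsilon > \mu^*$. Convexity of $g$ then gives the sign pattern $f_\epsilon(x) > x$ on $[0, \mu^1(\epsilon))$, $f_\epsilon(x) < x$ on $(\mu^1(\epsilon), \tilde\mu_\epsilon)$, and $f_\epsilon(x) > x$ on $(\tilde\mu_\epsilon, 1]$.

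To conclude convergence to $\mu^1(\epsilon)$ (rather than escape towards $\tilde\mu_\epsilon$), I must control the starting point. Since $m^2_k \leq m^1_k$, the choice of $k_\epsilon$ gives $l_{k_\epsilon} \leq m^1_{k_\epsilon} < \mu^1 + \epsilon$. A short calculation shows $g(\mu^1+\epsilon) < 2H(\mu^1+\epsilon) - 1$: indeed,
\[
g(\mu^1+\epsilon) - \bigl(2H(\mu^1+\epsilon) - 1\bigr) = 1 - \mu^1 - \epsilon - H(\mu^1+\epsilon) < 0,
\]
since $H(\mu^1+\epsilon) > H(\mu^1) = 1 - \mu^1$. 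This places $\mu^1+\epsilon$ strictly between the two roots of $f_\epsilon(x) = x$, so in particular $l_{k_\epsilon} \leq \mu^1+\epsilon < \tilde\mu_\epsilon$.

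Finally, I would apply the monotone iteration argument in the two sub-cases $l_{k_\epsilon} \leq \mu^1(\epsilon)$ (the sequence increases and is trapped in $[0, \mu^1(\epsilon)]$) and $\mu^1(\epsilon) < l_{k_\epsilon} < \tilde\mu_\epsilon$ (the sequence decreases and is trapped in $[\mu^1(\epsilon), \tilde\mu_\epsilon]$). Invariance of these intervals under $f_\epsilon$ follows from monotonicity together with $f_\epsilon(\mu^1(\epsilon)) = \mu^1(\epsilon)$ and $f_\epsilon(\tilde\mu_\epsilon) = \tilde\mu_\epsilon$; the only fixed point in the relevant interval is $\mu^1(\epsilon)$, hence $l_k \to \mu^1(\epsilon)$. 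The main delicate point is the critical case $\mu^1 = \mu^*$, where both roots collapse to $\mu^*$ as $\epsilon\to 0$; a second-order expansion of $g$ at its minimum yields $\tilde\mu_\epsilon - \mu^* = O(\sqrt{\epsilon})$, which still exceeds $\epsilon$ for all sufficiently small $\epsilon$, so the bound $l_{k_\epsilon} < \tilde\mu_\epsilon$ remains valid and the argument goes through unchanged.
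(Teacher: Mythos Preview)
Your argument is correct and follows essentially the same route as the paper: both show that $f_\epsilon(\mu^1+\epsilon)<\mu^1+\epsilon$ (your inequality $g(\mu^1+\epsilon)<2H(\mu^1+\epsilon)-1$ is exactly this), deduce that the starting value $l_{k_\epsilon}\le m^1_{k_\epsilon}<\mu^1+\epsilon$ lies strictly below the larger fixed point, and conclude convergence to $\mu^1(\epsilon)$. The paper phrases the last step as ``$H'(\mu^1(\epsilon))<1$, hence $\mu^1(\epsilon)$ is stable'', treating the strictly stable and critical cases separately, whereas you use the monotonicity of $f_\epsilon$ to run a direct monotone-sequence argument; these are equivalent.

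One remark: your final paragraph on the critical case is unnecessary. You have already shown, for \emph{every} sufficiently small $\epsilon>0$ and without any case distinction, that $g(\mu^1+\epsilon)<2H(\mu^1+\epsilon)-1$, which by convexity of $g$ places $\mu^1+\epsilon$ strictly between the two roots and hence gives $\mu^1+\epsilon<\tilde\mu_\epsilon$ directly. The $O(\sqrt\epsilon)$ expansion is correct but redundant.
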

\begin{proof}
We have $l_k = f_\epsilon^{k-k_\epsilon}(l_{k_\epsilon})$ and so
we need only verify that $l_{k_\epsilon}$ is in the basin of
attraction of $\mu^1(\epsilon)$ and that $\mu^1(\epsilon)$ is
stable. We know that \[f_\epsilon(\mu^1 + \epsilon) < \mu^1 +
\epsilon\] since $1 - H(\mu^1 + \epsilon) < 1 - H(\mu^1) = \mu^1$
and so it must be the case that $\mu^1 + \epsilon \in
(\mu^1(\epsilon), p(\epsilon))$. It now follows that
$l_{k_\epsilon} < p(\epsilon)$ since $l_{k_\epsilon} \leq
m^1_{k_\epsilon} < \mu^1 + \epsilon$. In the strictly stable case
$H'(\mu^1) < 1$, the stability of $\mu^1(\epsilon)$ follows from
the fact that $\mu^1(\epsilon)$ converges to $\mu^1$ as $\epsilon$
tends to zero (by the previous lemma) and therefore
$\mu^1(\epsilon)$ can be made arbitrarily close to $\mu^1$ by
choosing $\epsilon$ to be sufficiently small. This means that for
sufficiently small $\epsilon$, $H'(\mu^1(\epsilon)) < 1$ by the
continuity of $H'$. In the critical case $H'(\mu^1) = 1$, we have
$\mu^1(\epsilon) < \mu^1 $, so that (by strict convexity)
$H'(\mu^1(\epsilon)) < 1$. In either case it now follows that
$f_\epsilon^{k-k_\epsilon}(l_{k_\epsilon})$ converges to
$\mu^1(\epsilon)$.
\end{proof}
{\em Proof of Theorem \ref{stable}}
 The preceding lemmas tell us that
\[\liminf_{k \rightarrow \infty} m^2_k \geq \lim_{k \rightarrow
\infty} l_k = \mu^1(\epsilon).\] Letting $\epsilon$ tend to zero,
we obtain \[\liminf_{k \rightarrow \infty} m^2_k \geq \mu^1.\] The
fact that $m^2_k \leq m^1_k$ for every $k$ gives us the
corresponding inequality for the lim sup:
\[\limsup_{k \rightarrow \infty} m^2_k \leq \lim_{k \rightarrow
\infty} m^1_k = \mu^1.\] We conclude that $m^2_k$ converges to
$\mu^1$. \newline \newline Now,
\[\mathbb{E}[(X_u^n(\nu) - C_u)]^2 = m^2_k - 2r_k + \mu^2,\] so
that $\mathbb{E}[(X_u^n(\nu) - C_u)^2] \rightarrow 0$, remembering
that in the stable case the discrete solution and endogenous
solution coincide (i.e. $\mu^1 = \mu^2$). We have now shown that
\[\{\nu \in \P : \int x d\nu(x) \in b(\mu^1)\} \subseteq
B(\varsigma),\] and the necessity for convergence in mean ensures that we
have the reverse inclusion. This completes the proof.
\hfill$\square$

\section{Outside the basin of attraction of the endogenous solution}
In this section we examine what happens if we iterate
distributions with mean outside the basin of attraction of the
endogenous solution.
\begin{defn} Recall that a map $f$ has an
$n$-cycle starting from $p$ if $f^n(p) = p$, where $f^n$ denotes the $n$-fold
composition of $f$ with itself.
\end{defn}
It is easily seen that the map for the first moment $f: t \mapsto 1 -
H(t)$ can have only one- and two-cycles. This is because the
iterated map $f^{(2)}:t \mapsto 1 - H(1-H(t))$ is increasing in $t$ and
hence can have only one-cycles. Notice also that the fixed
points (or one-cycles) of $\ft$ come in pairs: if $p$ is a
fixed point then so too is $1 - H(p)$.
\newline 
\newline
We consider the iterated RDE:
\begin{equation}\label{RDE3}
X=1-\prod_{i=1}^{N_\emptyset}(1-\prod_{j=1}^{N_i}X_{ij}).
\end{equation}
This corresponds to the iterated map on laws on [0,1], $\T^2$, where $\T$
is given at the beginning of section \ref{tnote}. We denote a generic
two-cycle of the map $\ft$ by the pair $(\mu^1_+,\mu^1_-)$.

\begin{thm}
Suppose that $(\mu^1_+,\mu^1_-)$ is a two-cycle of $\ft$. There are at
most two solutions of the RDE (\ref{RDE3}) with mean $\mp$. There is a
unique endogenous solution $C^+$, and a (possibly distinct) discrete
solution, $S^+$,
taking values in $\{0,1\}$. The endogenous solution $C^+$ is given by $P(S^+=1|\tT)$ 
(just as in the non-iterated case). The
solutions are distinct if and only if $H'(\mm)H'(\mp)>1$, i.e. if and
only if $\mp$ (or $\mm$) is an unstable fixed point of $\ft$.

\end{thm}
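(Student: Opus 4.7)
The plan is to mirror the development of Sections~3 and~4 for the single-step RDE, now applied to the iterated RDE (\ref{RDE3}). First I would establish the discrete solution $S^+$: if the grandchild variables $X_{ij}$ in (\ref{RDE3}) are i.i.d.\ Bernoulli$(\mu)$, then $\prod_{j=1}^{N_i}X_{ij}$ is Bernoulli$(H(\mu))$, so $1-\prod_j X_{ij}$ is Bernoulli$(1-H(\mu))$, and hence $X$ is Bernoulli with mean $1-H(1-H(\mu))=\ft(\mu)$. Invariance forces $\mu$ to be a fixed point of $\ft$, of which $\mp$ is one. Uniqueness of $S^+$ among $\{0,1\}$-valued solutions with this mean follows verbatim from the argument of Lemma~\ref{invmsr}. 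Setting $C^+_u:=\mathbb{P}(S^+_u=1\mid \bN)$ yields an endogenous solution of (\ref{RDE3}) by the exact conditional expectation calculation used in the proof of Theorem~3.4 (conditional independence of grandchildren given $\bN$ converts the iterated product of indicators into the iterated product of $C^+_{ij}$'s), and uniqueness of the endogenous solution follows from the same martingale convergence argument used there, now iterated from depth~$2n$ with a point mass at $\mp$ at the initial level.

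For the ``at most two'' statement and the dichotomy, I would apply (\ref{RDE3}) to $(1-X)^2$. Expanding and using independence gives the moment relation
\[
1-2m^1+m^2 \;=\; H\bigl(1-2H(m^1)+H(m^2)\bigr).
\]
With $m^1=\mp$ and the two-cycle identity $H(\mp)=1-\mm$, this reduces to $G(m^2)=0$, where
\[
G(x)\;:=\;H\bigl(H(x)+2\mm-1\bigr)-x-(1-2\mp).
\]
One verifies $G(\mp)=0$ (corresponding to $S^+$) and $G'(\mp)=H'(\mp)H'(\mm)-1$. Since $H$ is strictly convex, so is $G$, hence $G$ admits at most two zeros, and a second zero distinct from $\mp$ exists precisely when $G'(\mp)>0$, i.e.\ when $H'(\mm)H'(\mp)>1$. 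Iterating this reasoning for higher moments along the lines of Lemma~\ref{momlem} shows that there are at most two moment sequences, and hence (by the determinacy of distributions on $[0,1]$ from their moments, cf.\ Feller) at most two invariant distributions with mean $\mp$.

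When $H'(\mm)H'(\mp)>1$, I would confirm that the second moment sequence is indeed realised by a genuine invariant distribution, namely $C^+$, by mimicking Theorem~\ref{endthm2}. Truncate $N$ to $N^n$, so $N\leq N^*$, and embed the iterated recursion in the $(N^*)^2$-regular grandchildren tree, to which Theorem~\ref{endthm1} then applies. Computing $P^-$ for the transitions $(0,1)\leftrightarrow(1,0)$ along the two-step distinguished line of descent, the contribution factors across generations: one factor $H_n'(\mpk)/N^*$ at the ``$+$'' level and one factor $H_n'(\mmk)/N^*$ at the ``$-$'' level, yielding Perron--Frobenius eigenvalue $\rho_n=H_n'(\mpk)H_n'(\mmk)/(N^*)^2$ against effective branching $d=(N^*)^2$; hence Warren's criterion $d\rho_n\leq 1$ reads $H_n'(\mpk)H_n'(\mmk)\leq 1$. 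Passing to the limit $n\to\infty$ using Lemma~\ref{properties}, together with the $L^2$-approximation of $C^+$ by its truncated versions (paralleling Theorem~\ref{conv}), yields the stated dichotomy.

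The principal obstacle will be the last step: adapting Warren's theorem to the two-step recursion requires careful bookkeeping of the distinguished descendant at depth~$2$ to verify that $\rho_n$ factorises as claimed, together with checking the non-degeneracy conditions of Theorem~\ref{endthm1} in the critical case $H'(\mp)H'(\mm)=1$. A secondary difficulty is extending the $L^2$-Cauchy argument of Theorem~\ref{conv} so as to identify the non-discrete moment sequence as the distribution of $C^+$, rather than merely inferring its existence abstractly.
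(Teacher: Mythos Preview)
Your proposal is correct and follows essentially the same route as the paper's own proof. Both arguments establish $S^+$ and $C^+$ exactly as in Theorems~3.4--3.5, reduce the ``at most two'' claim to the second-moment equation (your function $G$ coincides with the paper's $g(t)\defto H(1-2H(\mp)+H(t))-(1-2\mp+t)$, and the paper likewise notes $g'(\mp)=H'(\mp)H'(\mm)-1$), and then handle the unstable case by truncating $N$, invoking Warren's criterion for the truncated iterated RDE, and passing to the limit via an $L^2$ argument paralleling Theorem~\ref{conv}. The paper's proof is in fact terser than yours on precisely the point you flag as the principal obstacle---it simply writes ``as in the proof of Theorem 4.9'' without spelling out how Warren's framework is adapted to the two-step recursion---so your explicit proposal to embed in the $(N^*)^2$-regular grandchild tree and verify the factorisation $d\rho_n=H_n'(\mu^1_{+,n})H_n'(\mu^1_{-,n})$ is a genuine elaboration rather than a deviation.
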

\begin{proof}
This uses the same method as the proofs of results in section
\ref{momsec}.

First, it is clear that $S^+$ is a solution to (\ref{RDE3}), where
$P(S^+=1)=\mp=1-P(S^+=0)$. Now take interleaved tree-indexed solutions to
the RDE on the tree $\tT$, corresponding (on consecutive layers) to mean $\mp$ and $\mm$.
Then we define
$C_{(n)}^+=P(S^+_\emptyset=1|N_v; |v|\leq 2n)= 1 - \prod_{i_1=1}^{N_\emptyset}(1 -
\prod_{{i_2}=1}^{N_{i_1}}(...(1-(\mp)^{N_{i_1i_2\ldots i_{2n-
1}}})...)) $. It follows 
that $C_{(n)}^+$ converges a.s. and in $L^2$ to $C^+$ and that this must
be the unique endogeneous solution (since if $Z$ is any solution with
mean $\mp$ then $E[Z_\emptyset|N_v; |v|\leq 2n]=C_{(n)}^+$).

As in Lemma \ref{momlem}, we establish that there are at most two solutions
by showing that there are at most two possible moment sequences for a
solution and that if $\mp$ is stable (for $\ft$) then the only possible
moment sequence corresponds to the discrete solution $S^+$.

To do this, note that, denoting a possible moment sequence starting with first
moment $\mp$ by $(\mu^k_+)$, we have
\begin{equation}\label{R4}
H(\mmk)=H(\sum_{j=0}^k(-1)^j{{k}\choose{j}}H(\mpj))=\sum_{j=0}^k(-
1)^j{{k}\choose{j}}\mpj.
\end{equation}

Then we look for
solutions of
\begin{equation}\label{R5}
H(\sum_{j=0}^{k-1}(-1)^j{{k}\choose{j}}H(\mpj)+(-1)^kH(t))=\sum_{j=0}^{k-1}(-
1)^j{{k}\choose{j}}\mpj+(-1)^kt,
\end{equation}
in the range where the argument of $H$ on the lefthand side is non-
negative and less than 1. In this range $H$ is increasing and convex so
there are at most two solutions.

Suppose that $\mp$ is a stable fixed point then the
unique moment sequence is constant, since the other solution of
$$
g(t)\defto H(1-2H(\mp)+H(t))-(1-2\mp+t)=0
$$
must be greater than $\mp$ (because $g'(\mp)=H'(\mp)H'(\mm)-1\leq 0$).

If $\mp$ is unstable, then there are, potentially two solutions for
$\mu^2_+$, one of which is $\mp$. Taking the other potential solution,
and seeking to solve (\ref{R5}),
one of the solutions will give a value
for $\mmk$ greater than $\mu^*>\mu^2_-$ which is not feasible, so there
will be at most one sequence with $\mu^2_+\neq\mp$.

Now, as in the proof of Theorem 4.9, we can show that, if $\mp$ is unstable then, in the
corresponding RDE with branching factor truncated by $n$, the two
solutions to the RDE are distinct for large $n$, and the
endogenous solution converges to $C^+$ in $L^2$ as $n\rightarrow \infty$. It follows that there
are two distinct solutions in this case.

\end{proof}

Given a fixed point $\mp$ of $\ft$, denote the law of the corresponding
conditional probability solution by $\varsigma_+$. Denote the
corresponding basin of attraction (under $\T^2$) by $B(\vp)$ and denote
the basin of attraction of $\mp$ under the map $\ft$ by $b^2(\mp)$. Then

\begin{thm}
the following dichotomy holds:
\begin{itemize}
\item[(i)]if $H'(\mp)H'(\mm)>1$, then
$$
B(\vp)=\{\pi:\, \pi\hbox{ has mean }\mp\hbox{ and }\pi\hbox{ is not
concentrated on }\{0,1\}\}.
$$
\item[(ii)]if $H'(\mp)H'(\mm)\leq 1$ then
$$
B(\vp)=\{\pi:\, \pi\hbox{ has mean }m\in b^2(\mp) \}
$$

\end{itemize}

\end{thm}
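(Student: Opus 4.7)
The plan is to adapt the basin-of-attraction arguments of Section 5 by replacing $\T$ (with its fixed point of mean $\mu^1$) with $\T^2$ (with fixed point of mean $\mp$) and running the second-moment/cross-moment analysis in parallel. For $n$ even, inserting iid $\nu$-variables at level $n$ produces $X^n_u(\nu)$ with law $\T^{n-|u|}(\nu)$; set $\tilde m^2_j = \mathbb{E}[(X^{2j}_\emptyset(\nu))^2]$ and $\tilde r_j = \mathbb{E}[C^+_\emptyset X^{2j}_\emptyset(\nu)]$. A single application of $\T$ sends moments $(a,b)$ to $(1-H(a), 1-2H(a)+H(b))$, so on a mean-$\mp$ distribution the intermediate mean is $\mm$ and a direct computation shows that both $\tilde m^2_j$ and $\tilde r_j$ evolve under $\T^2$ via the monotone map
\[
G(b) \;=\; 1-2H(\mm)+H\bigl(1-2H(\mp)+H(b)\bigr),
\]
whose derivative at the fixed point $\mp$ equals $H'(\mp)H'(\mm)$. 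The aim is then to show $\mathbb{E}[(X^{2j}_\emptyset(\nu) - C^+_\emptyset)^2] = \tilde m^2_j - 2\tilde r_j + \mathbb{E}[(C^+_\emptyset)^2] \to 0$.

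For case (i), the previous theorem furnishes exactly two fixed points of $G$ in $[0,1]$: the discrete value $\mp$ and a strictly smaller $\mu^{2,C} \defto \mathbb{E}[(C^+_\emptyset)^2]$. The hypothesis $H'(\mp)H'(\mm)>1$ makes $\mp$ repelling and $\mu^{2,C}$ attracting for the monotone increasing $G$. Since $\nu$ is not concentrated on $\{0,1\}$, the strict inequality $x^2 < x$ on a positive-measure subset of $[0,1]$ yields $\tilde m^2_0 < \mp$, and monotonicity of $G$ forces the orbit $(\tilde m^2_j)$ to be monotone and bounded in $[0,\mp)$, hence to converge to the only fixed point of $G$ in that interval, namely $\mu^{2,C}$. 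The same recursion governs $\tilde r_j$, with $\tilde r_0 = \mp^2 < \mp$ (the level-$2j$ inserted variables are independent of the tree that determines $C^+$), so $\tilde r_j \to \mu^{2,C}$ too. This gives $L^2$-convergence of $X^{2j}_\emptyset(\nu)$ to $C^+_\emptyset$, hence weak convergence of $\T^{2j}(\nu)$ to $\vp$.

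For case (ii), $C^+ = S^+$ and $\mu^{2,C} = \mp$. For $\nu$ with mean $m \in b^2(\mp)$, the first moments $m^1_j = \ft^j(m)$ converge to $\mp$, so the alternate-level means of the iterates approach $\mp$ and $\mm$. The upper bound $\tilde m^2_j \leq m^1_j \to \mp$ is automatic on $[0,1]$; for the matching lower bound, adapt the proof of Theorem \ref{stable} by dominating the true (non-autonomous) recursion by an $\epsilon$-perturbed autonomous recursion whose stable fixed point tends to $\mp$ as $\epsilon\to 0$. The stability of this perturbed fixed point uses $G'(\mp) = H'(\mp)H'(\mm) \leq 1$ together with strict convexity of $H$ in the critical sub-case of equality, exactly as in Section 5. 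The same treatment applied to $\tilde r_j$ completes the $L^2$-convergence.

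For the reverse inclusions, $L^2$-convergence forces convergence of first moments, so $\nu \in B(\vp)$ implies $\ft^j(\mathbb{E}_\nu[x]) \to \mp$, i.e.\ $\mathbb{E}_\nu[x] \in b^2(\mp)$; in case (i), $\mp$ is a repelling fixed point of $\ft$, forcing $\mathbb{E}_\nu[x] = \mp$ exactly, and the $\{0,1\}$-valued discrete distribution must be excluded because it is a $\T^2$-fixed point distinct from $\vp$. I expect the main obstacle to be the monotone-dynamics analysis of $G$ on $[0,\mp]$ in case (i): the argument hinges entirely on the previous theorem's guarantee that there are no fixed points of $G$ in $[0,\mp)$ other than $\mu^{2,C}$, which ultimately traces back to strict convexity of $H$ via the moment-equation analysis of Lemma \ref{momlem}. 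The critical sub-case $H'(\mp)H'(\mm)=1$ in (ii) is the subtlest point there, requiring strict convexity of $H$ to drive the perturbed derivative strictly below $1$.
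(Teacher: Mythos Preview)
Your proposal is correct and takes essentially the same approach as the paper, which proves the theorem in one line by stating that it ``can be proved in exactly the same way as Theorems 5.3 and 5.4.'' You have in fact done more than the paper here: you have correctly identified the iterated second-moment map $G(b)=1-2H(\mm)+H(1-2H(\mp)+H(b))$, verified its convexity and the value $G'(\mp)=H'(\mp)H'(\mm)$, and spelled out the dynamical analysis in both the unstable and stable cases (including the $\epsilon$-perturbation in the critical sub-case), exactly paralleling the Section~5 arguments the paper refers to.
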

\begin{proof}
This can be proved in exactly the same way as Theorems 5.3 and 5.4.
\end{proof}

\section{Examples}
We conclude with some examples.

\begin{example}
We consider first the case where $N$ is Geometric($\alpha$), so that
$P(N=k)=\beta^{k-1}\alpha$ and $H(s)=\frac{\alpha s}{1-\beta s}$ (with $\beta=1-\alpha$). It follows that
$$
\ft(s)=s,
$$
so that every pair  $(s,\frac{1-s}{1-qs})$ is a two-cycle of $f$ and
the unique fixed point of $f$ is $1-\sqrt \alpha$. It also follows that $s$
is a neutrally stable fixed point of $\ft$ for each $s\in[0,1]$. Thus we
see that the unique endogenous solution to the original RDE is discrete
and the value at the root of the tree is the a.s. limit of $1-\prod_{i_1=1}^{N_\emptyset}(1-
\prod_{i_2=1}^{N_{i_1}}(\ldots (1-(1-\sqrt
\alpha)^{N_{i_1,\ldots,i_n}})\ldots ))$. Moreover, for any $s$, there is a
unique solution to the iterated RDE with mean $s$ and it is discrete and
endogenous and is the a.s. limit of $1-\prod_{i_1=1}^{N_\emptyset}(1-
\prod_{i_2=1}^{N_{i_1}}(\ldots (1-s^{N_{i_1,\ldots,i_{2n-1}}})\ldots )).$

\end{example}

\begin{example}
Consider the original noisy veto-voter model on the binary tree. It
follows from (\ref{trans}) that
$$
H(z)=(pH(z)+qz)^2\Rightarrow H(z)=\frac{1-2pqz-\sqrt{1-4pqz}}{2p^2}.
$$
This is non-defective if and only if $p\leq \frac{1}{2}$ (naturally),
i.e. if and only if extinction is certain in the trimmed tree from the
original  veto-voter model.
It is fairly straightforward to show that $H'(\mu^1)>1\Leftrightarrow
p<\frac{1}{2}$. Thus, the endogenous solution is non-discrete precisely
when the trimmed tree is sub-critical.

\end{example}

\begin{example}
In contrast to the case of the veto-voter model on the binary tree, the veto-voter model on a trinary tree can
show a non-endogenous discrete solution even when the trimmed tree is
supercritical.
More precisely, the trimmed tree is supercritical precisely when
$p>\frac{1}{3}$, but  the
discrete solution is non-endogenous if and only if
$p<p_e^{(3)}\defto\frac{3.\sqrt 3-4}{3.\sqrt 3-2}$, and
$p_e^{(3)}>\frac{1}{3}$.
\end{example}

\end{document}